\newtheorem{thm}{Theorem}
\newtheorem{rk}{Remark}
\newtheorem{prop}{Proposition}
\newtheorem{lemma}{Lemma}
\newtheorem{defi}{Definition}
\newcommand{\R}{{\mathbb{R}}}
\newcommand{\Z}{{\mathbb{Z}}}
\begin{document}

\title{$C^1$ stability of endomorphisms on two dimensional manifolds.}

\author[J. Iglesias]{J. Iglesias}
\address{Universidad de La Rep\'ublica. Facultad de Ingenieria. IMERL.
Julio Herrera y Reissig 565. C.P. 11300. Montevideo, Uruguay}
\email{jorgei@fing.edu.uy }
\author[A. Portela]{A. Portela}
\address{Universidad de La Rep\'ublica. Facultad de Ingenieria. IMERL.
Julio Herrera y Reissig 565. C.P. 11300. Montevideo, Uruguay }
\email{aldo@fing.edu.uy }
\author[A. Rovella]{A. Rovella}
\address{Universidad de La Rep\'ublica. Facultad de Ciencias.
Centro de Matem\'atica. Igu\'a 4225. C.P. 11400. Montevideo,
Uruguay} \email{leva@cmat.edu.uy}

\date{\today}
\begin{abstract}
A set of necessary conditions for $C^1$ stability of noninvertible
maps is presented. It is proved that the conditions are
sufficient for $C^1$ stability in compact oriented manifolds of
dimension two. An example given by F.Przytycki in 1977 is
shown to satisfy these conditions. It is the first example known of a
$C^1$ stable map (noninvertible and nonexpanding) in a manifold of
dimension two, while a wide class of examples are already known in every other dimension.
\end{abstract}
\subjclass[2010]{Primary 37C75; Secondary 37C20.}

\keywords{endomorphisms, stability.}

\maketitle
\section{Introduction}
After the work of many authors, it was proved by C.Robinson
(\cite{rob}) and R.Ma\~{n}\'e (\cite{ma}) that a
diffeomorphism of a compact manifold is $C^1$ structurally stable
if and only if it satisfies the Axiom A and the strong transversality
condition.
It is also known since M.Shub \cite{sh} that an expanding map is
also $C^1$ stable.
On the other hand, Anosov endomorphisms not falling in the above
categories, fail to be stable (\cite{prz1} and \cite{mp}).\\

The following is the definition of (strong) Axiom A:
\begin{defi}
\label{def1}
A $C^1$ map $f$ satisfies the Axiom A if:\\
(A1) the nonwandering set $\Omega_f$ has a hyperbolic structure,\\
(A2) the set of periodic points is dense in $\Omega_f$, and\\
(A3) each basic piece is either expanding or the restriction of $f$ to it is injective.\\
\end{defi}

In a compact manifold $M$, the following properties are necessary for a map $f\in C^1(M)$ to be $C^1$ structurally stable:\\
{\bf (C1)} The set of critical points of $f$ is empty (a point $x$ is critical or singular for $f$ if the differential
of $f$ at $x$ is noninvertible).\\
{\bf (C2)} The map $f$ is Axiom A without cycles.\\
{\bf (C3)} If the unstable set of a basic piece $\Lambda$ intersects another basic set $\Lambda '$, $W^u(\Lambda )\cap\Lambda^{'}\neq\emptyset$, then $\Lambda$ is an expanding basic piece.

Condition (C1) is obviously necessary for $C^1$ structural
stability. Examples of $C^r$ ($r\geq 2$)
stable maps with critical points can be easily found for maps in
manifolds of dimension one;
some examples in dimension two were recently discovered (see \cite{ipr1}).
A theorem by P.Berger \cite{ber}, gives sufficient
conditions for $C^\infty$ stability of maps having critical points.\\

F.Przytycki showed (\cite{prz}) that if $f$ is a $C^1$-$\Omega$ stable
map that satisfies Conditions (A1) and (A2),
then it also satisfies (A3). It follows that an Anosov
endomorphism is not $\Omega$ stable unless it is expanding or a diffeomorphism. Later, adapting the
proof of the $C^1$ stability conjecture given by R.Ma\~{n}\'e, it was proved by
N.Aoki, K.Moriyasu and N.Sumi that if $f$ is a $C^1$-$\Omega$ stable map without critical
points then $\Omega(f)$ has a hyperbolic structure (\cite{ams}). These two results imply
that (C2) is a necessary condition for $C^1$ stability.\\
That item (C3) is a necessary condition for $C^1$ stability was also shown by F.Przytycki in Theorem C of his above mentioned article.\\
Still in that article, F.Przytycki gave an example of a map in the two
torus satisfying the three conditions above, and asked if the map
obtained is structurally stable. The first objective of our work was
to prove the stability of this map; we finally arrive to a
characterization of the $C^1$ stability in dimension
two. Unfortunately, the techniques applied do not allow a generalization
to higher dimensions.  The same characterization of the $C^1$
stability in higher dimensions stands as a conjecture.\\
Note that a self map of a compact manifold must be a covering map if
it is singular points free. It follows that the unique two dimensional
orientable manifold admitting a noninvertible $C^1$ stable map is the
two torus. There exist examples of $C^1$ stable maps in
compact manifolds of dimension at least three.
However, that construction cannot be carried on in manifolds of
dimension two, because of the nature of the attractors (see \cite{ipr2}).
Indeed, the problem in dimension two is that it is not possible to
find examples of $C^1$ stable maps without saddle type basic pieces,
and the difficulty of having that type of basic pieces is that the
unstable manifolds could have self intersections.\\
If two (or more) unstable manifolds intersect at a point $z$, the
{\em Strong Transversality Condition} requires that they intersect
transversally and that the intersection is transverse to the stable
manifold through $z$: see definition 3 in the next section. Here we prove:

\begin{thm}
\label{unico}
The following conditions are necessary for an endomorphism $f$ of a compact manifold $M$ to be $C^1$ stable:
\begin{enumerate}
\item
$f$ has no critical points.
\item
$f$ satisfies the Axiom A.
\item
$f$ satisfies the Strong Transversality Condition.
\end{enumerate}
If $M$ is a two dimensional manifold, then the above conditions are also sufficient for $C^1$ stability of the map $f$.
\end{thm}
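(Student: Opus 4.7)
The first three conditions follow from results already in the literature. Condition (1) is immediate: at a critical point a small $C^1$ perturbation alters the rank of $df$, so no topological conjugacy with the perturbation can exist. For (2), one combines F.~Przytycki's theorem that $C^1$-$\Omega$ stability together with (A1) and (A2) forces (A3) with the Aoki--Moriyasu--Sumi hyperbolicity theorem ($C^1$-$\Omega$-stable plus no critical points implies $\Omega(f)$ hyperbolic) to obtain Axiom~A, while the no-cycles part is a Franks-type argument adapted from the diffeomorphism case. For (3), necessity of (C3) is Theorem~C of \cite{prz}, and transversality of the remaining intersections asserted by the Strong Transversality Condition is forced by a local perturbation argument: any tangential intersection between two unstable leaves, or between an unstable and a stable leaf, can be perturbed to split or merge, producing orbits not present in $f$.

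\textbf{Sufficiency: setup.} Fix $f$ satisfying (1)--(3) on a compact two-manifold $M$, and let $g$ be $C^1$-close to $f$. Since $f$ has no critical points it is a covering map, so by the discussion in the introduction $M$ is the two-torus. The spectral decomposition gives $\Omega(f)=\Lambda_1\cup\cdots\cup\Lambda_N$; each $\Lambda_i$ persists to a basic piece $\Lambda_i(g)$ of $g$ equipped with a local conjugacy, built by the standard arguments: Shub's theorem for expanding pieces, contraction along basins for attractors, and a reduction to the classical invertible situation via the inverse limit for saddle-type pieces. Order the pieces by $\Lambda_i\prec\Lambda_j$ iff $W^u(\Lambda_i)\cap W^s(\Lambda_j)\ne\emptyset$; condition (2) turns this into a partial order, and the plan is to build the global conjugacy $h:M\to M$ inductively along it.

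\textbf{Sufficiency: global conjugacy and main obstacle.} Starting from minimal elements (attractors), extend the local conjugacy $h_\Lambda$ to the whole basin $W^s(\Lambda)$ by sliding along stable foliations, a standard move. To incorporate a saddle-type piece $\Lambda$, extend $h_\Lambda$ from $\Lambda$ outward along the unstable leaves; far out along $W^u(\Lambda)$ points fall into basins of lower pieces where $h$ is already defined, so compatibility must be verified. The genuine obstacle is exactly this compatibility at points of $W^u(\Lambda)\cap W^u(\Lambda')$, including self-intersections of a single $W^u(\Lambda)$: unlike in the invertible setting, here unstable leaves are immersed curves that may cross in the ambient torus, and the same point may lie on several leaves arising from different preorbits, so naive leaf-by-leaf extension yields conflicting values for $h$. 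The Strong Transversality Condition is the geometric hypothesis that resolves this: at every crossing point $z$ the two one-dimensional unstable leaves meet at nonzero angle and the crossing is transverse to $W^s(z)$, and a compactness argument upgrading this to a uniform statement (bounded number of preorbits through $z$, uniform angular bounds) forces the several candidate values of $h(z)$ to coincide and to vary continuously in $z$. The restriction to dimension two enters precisely here, since one-dimensional unstable leaves admit a clean angular notion of transversality and a finite crossing combinatorics on compact sets, a luxury absent in higher dimensions and the structural reason the argument does not generalize.
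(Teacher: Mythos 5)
Your outline matches the paper's skeleton (known results for necessity of (1)--(2), perturbation arguments for (3), local conjugacies on basic pieces assembled along the order of basic sets), but at the two places where the real work happens your argument has genuine gaps.

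For the necessity of the Strong Transversality Condition, ``any tangential intersection can be perturbed to split or merge, producing orbits not present in $f$'' is not a valid mechanism: a topological conjugacy does not see tangency versus transversality, and creating or destroying isolated crossing points of noncompact immersed unstable leaves is not, by itself, an obstruction to conjugacy (the conjugacy need not be near the identity, and intersection points may simply be relocated). The paper's actual argument is a Kupka--Smale/Franks scheme: one defines residual sets $\mathcal K_1(n,R)$ and $\mathcal K_2(n,R)$ of maps whose relevant unstable subspaces (indexed by the finitely many preorbits through a point, via the sets $P_z(n,R,g)$) are in general position and transverse to the stable spaces, observes that a stable $f$ must be conjugate to maps in these residual sets, and then, when general position fails, uses Franks' Lemma 2.1 of \cite{F} to perturb so that unstable manifolds of periodic points intersect along a set of codimension \emph{less} than the sum of their codimensions (e.g.\ an arc of coincidence rather than isolated points) --- a topologically detectable configuration that contradicts conjugacy to a map of $\mathcal K_1$ (and similarly with $\mathcal K_2$ for the stable/unstable part). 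Your proposal is missing this general-position bookkeeping for more than two preorbits and the mechanism that converts non-transversality into a conjugacy invariant.

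For sufficiency, the assertion that transversality plus compactness ``forces the several candidate values of $h(z)$ to coincide'' is where the proof is, and it is not how the construction can go: there are no canonical candidate values to compare, because a leafwise extension of $h$ along $W^u(\Lambda,f)$ has no reason to send a crossing point of two $f$-leaves to a crossing point of the corresponding $g$-leaves. The paper instead (i) reduces to perturbations $g=ft$ supported in small sets $W$ chosen away from fundamental domains (Palis--Smale factorization \cite{ps}, Remark \ref{importante}); (ii) shows $I_1$ is closed and builds $L$ with $f(L)\cap L=\emptyset$, so that a fundamental annulus $K$ in the basin of a periodic attractor contains $L$ in its interior (Lemmas \ref{closed}, \ref{lcompactL}, via \cite{ipr1}); (iii) constructs compatible unstable foliations $\mathcal F_f$, $\mathcal F_g$ \`a la de Melo \cite{dM} on a neighborhood $S_f$ of $\Gamma$ together with a transverse direction field $\chi_g$ on $f^{-k}(K)\cap S_f$; and (iv) \emph{defines} $h$ on $K$ case by case according to whether $f^{-k}(x)\cap S_f$ has $0$, $1$ or $2$ points --- in the two-point case $h(x)$ is the unique nearby transverse intersection of the two perturbed image leaves $g^k(H(F_{y_1}))$ and $g^k(H(F_{y_2}))$, with uniqueness coming from the uniform separation $\rho$ of Lemma \ref{lemmaS}. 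Only then is $h$ propagated over the basins by equivariance and extended to all of $M$ by choosing, among the $d^{\ell}$ preimages, the one closest to the point, with expansivity near $\Gamma\cup E$ and the foliation estimates (Lemmas \ref{l1}--\ref{l3}) giving continuity of the extension to $\overline{B}=M$. None of this quantitative structure --- the small-support reduction, the fundamental domain containing $L$, the foliation/direction-field product structure, the closest-preimage extension --- is present in your sketch, and without it the compatibility problem you correctly identify is not resolved but only restated. (A minor point: ``$M$ is the two-torus'' should be qualified; the theorem also covers diffeomorphisms and expanding maps, and the torus claim concerns the orientable noninvertible nonexpanding case.)
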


\section{Definitions}
Since the hypothesis of stability implies that the map satisfies the
Axiom A and has no critical points, these facts can be
assumed throughout the whole article.
{\bf Notation:} Let $\Omega$ denote the nonwandering set of $f$,
$A=A(f)$ the union of the attracting basic pieces, $E=E(f)$
the union of the expanding basic pieces and $\Gamma=\Gamma (f)=\Omega\setminus (A\cup E)$.\\
The Axiom A condition implies
that the restriction of $f$ to $\Gamma\cup A$ is injective. The union
of the attracting periodic orbits will be denoted by $A_{per}$.
Recall that a basic piece $\Lambda$ is called expanding if there exist
constants $c>0$ and $\lambda>1$ such that
$|Df^n(v)|\geq c\lambda^n|v|$ for every $n\geq 0$ and $v\in
T_\Lambda(M)$. It follows that a basic piece $\Lambda$ is contained
in $\Gamma$ if and only if it has an unstable manifold of dimension less than the
dimension of $M$ and this unstable manifold is not
contained in $\Lambda$.\\
Definitions and basic properties of invariant manifolds associated to
basic pieces of Axiom A maps are presented in the article of F.Przytycki
\cite{prz1}.
We will recall here some of these results.\\
{\em Stable manifolds. }
The stable set of a point $x\in\Omega$ is denoted by $W^s(x)$ and defined as the set of points
$y\in M$ such that
$d(f^n(x),f^n(y))\to 0$ as $n\to +\infty$, where $d$ is the distance
induced by the Riemannian metric on $M$.
For each point $x\in \Gamma\cup A$ and $\epsilon>0$ sufficiently
small, the local stable manifold of $x$ is defined as
the set $W^s_\epsilon(x)$ of points $y\in M$ such that
$d(f^n(x),f^n(y))<\epsilon$ for every $n\geq 0$. It is known that
$W^s_\epsilon(x)$ is tangent to the stable space at $x$ and that
$\{W^s_\epsilon(x)\ :\ x\in\Gamma\cup A\}$ is a family of $C^1$
embedded disks. The stable set of a basic piece $\Lambda$ is defined as the union of the stable sets of points in $\Lambda$.
The path connected component of $W^s(x)$ containing the point $x$ will be denoted by $W^s_0(x)$.\\
{\em Unstable manifolds. }
If $x\in \Gamma\cup A$, then there exists a unique preorbit $\{x_n\}$
of $x$ contained in $\Gamma\cup A$. Provided $\epsilon>0$ is
sufficiently small, it is adequate to define the local unstable
manifold of $x$ as the set $W^u_\epsilon(x)$ of points $y$ in $M$ having a preorbit $\{y_n\}$ such that $d(x_n,y_n)<\epsilon$ for every $n\geq 0$. If $\Lambda\subset\Gamma$ is a basic piece then there is a well defined unstable space $E^u(x)$ for each $x\in\Lambda$ that is invariant and expanded by the differential. It is known that $W^u_\epsilon(x)$ is tangent to the unstable space at $x$ and that $\{W^u_\epsilon(x)\ :\ x\in\Gamma\cup A\}$ is a continuous family of $C^1$ embedded disks. The unstable set of $x\in \Gamma\cup A$ is defined as $\bigcup_{n>0}f^n(W^u_\epsilon(x_n))$. An equivalent definition would be the following: $y\in W^u(x)$ if and only if there exists a preorbit $\{y_n\}$ of $y$ such that $d(x_n,y_n)\to 0$, where $\{x_n\}$ is the preorbit of $x$ contained in $\Gamma$. Finally define $W^u(\Lambda)$ as the union of the unstable sets of points in $\Lambda$, and $W^u(\Gamma)$ as the union of the $W^u(\Lambda)$ for $\Lambda\subset\Gamma$. If $\Lambda$ is an expanding basic piece, then there exists a neighborhood $U$ of $\Lambda$ such that the closure of $U$ is contained in $f(U)$ and the intersection of the backward iterates of $U$ is equal to $\Lambda$. The unstable set $W^u(\Lambda)$ of a basic piece $\Lambda\subset E$ is defined as the union of the future iterates of $U$.\\

\noindent
{\em Properties of stable and unstable sets.}\\
\begin{enumerate}
\item
The path component $W^s_0(x)$ of $W^s(x)$ containing $x$ is an injective immersion of an Euclidean space $\R^n$. Moreover, $W^s_0(x)$ can be obtained as the union, for $n\geq 0$, of the connected component of $f^{-n}(W^s_\epsilon(f^n(x)))$ that contains $x$. For every $x\in \Gamma\cup A$ it holds that:
$$
W^s(x)=\bigcup_{n\geq 0}f^{-n}(W^s_0(f^n(x))).
$$
Therefore, each component of the stable set of a point $x$ is always an injective immersed manifold.
Moreover, different stable sets cannot intersect.
\item
For each point $z\in M$ there exists a point $x\in \Omega$ such that $z\in W^s(x)$. As stable sets are submanifolds, the stable space $E^s(z)=T_z(W^s(x))$ is well defined for every $z\in M$.
\item
Unstable sets of different points can have nonempty intersection. Moreover, the unstable set of a point $x\in \Omega$ is a (not necessarily injective) immersed manifold.
\item
The stable set of a basic piece $\Lambda$, defined as $W^s(\Lambda)=\cup_{x\in \Lambda} W^s(x)$, is backward invariant ($f^{-1}(W^s(\Lambda))=W^s(\Lambda)$).
The unstable set of a basic piece $\Lambda$ is forward invariant but not necessarily backward invariant.
\end{enumerate}

\begin{defi}
\label{order}
Let $\Lambda_1$ and $\Lambda_2$ be different basic pieces of an Axiom A map $f$. Say that $\Lambda_1>\Lambda_2$ if $W^u(\Lambda_1)\cap W^s(\Lambda_2)\neq \emptyset$.
\end{defi}
If $f$ is an Axiom A map that satisfies the transversality condition, then
the relation above defines a partial order. Moreover, there exists an
adapted filtration for this order (see {\cite[proposition 1.1]{prz}}).\\
A closed set $L$ is attracting if there exists an open neighborhood
$N$ of $L$ such that the closure
of $f(N)$ is contained in $N$, and the intersection of the forward iterates of $N$ is equal to $L$.\\
The existence of filtrations for Axiom A map with no cycles implies the following result:
\begin{lemma}
\label{unstable}
If $f$ is an Axiom A map with no cycles, then $W^u(\Gamma)\cup A$ is a closed attracting set.
\end{lemma}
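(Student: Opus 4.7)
The strategy is to realize $W^u(\Gamma)\cup A$ as the maximal invariant set of a suitable filtrating open neighborhood $N$, which simultaneously yields closedness (via $\bigcap_n f^n(\overline N)$ being a nested intersection of compact sets) and the attracting-set property from the previous section.

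Using the filtration produced by Proposition 1.1 of \cite{prz}, one has $\emptyset=M_0\subset M_1\subset\cdots\subset M_k=M$ with $f(M_i)\subset\mathrm{int}(M_i)$, each layer $M_i\setminus\mathrm{int}(M_{i-1})$ isolating exactly one basic piece consistently with the order $>$. Because each expanding piece $\Lambda$ admits a repelling neighborhood $U$ with $\overline U\subset f(U)$ and $\bigcap_n f^{-n}(U)=\Lambda$, it is maximal in $>$, so one can arrange the indexing so that $\Gamma\cup A$ is contained in a prefix $M_j$ and $E$ lies in the upper layers. Setting $N=\mathrm{int}(M_j)$ yields an open set with $N\supset\Gamma\cup A$, $N\cap E=\emptyset$, and $\overline{f(N)}\subset f(M_j)\subset\mathrm{int}(M_j)=N$.

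To verify $\bigcap_n f^n(N)=W^u(\Gamma)\cup A$, the inclusion $\supseteq$ is direct: points of $A$ have preorbits in $A\subset N$, and by definition every point of $W^u(\Gamma)$ has a preorbit whose tail sits near $\Gamma\subset N$, with the finite initial segment kept in $N$ by $f(N)\subset N$. For $\subseteq$, take $x\in\bigcap_n f^n(N)$ with a full preorbit $\{x_n\}\subset N$; by compactness of $\overline N$ its $\alpha$-limit is nonempty and, by a standard returning-neighborhood argument, lies in $\Omega\cap\overline N=\Gamma\cup A$. If this $\alpha$-limit meets a saddle piece $\Lambda\subset\Gamma$, a hyperbolic shadowing argument using the continuous family of local unstable disks places some $x_n$ into $W^u_\epsilon(z)$ for $z\in\Lambda$, giving $x=f^n(x_n)\in W^u(\Lambda)\subset W^u(\Gamma)$; if it meets an attracting piece $\Lambda\subset A$ with attracting neighborhood $V$ satisfying $\bigcap_n f^n(V)=\Lambda$, then infinitely many $x_n\in V$ force $x\in\bigcap_n f^n(V)=\Lambda\subset A$. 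The step I expect to be most delicate is the arrangement of the filtration so that the pieces of $E$ are separated from those of $\Gamma\cup A$; this is where the repelling character of expanding basic pieces interacts with the no-cycles hypothesis, ruling out any connection $\Lambda'>\Lambda$ with $\Lambda'\in\Gamma$ and $\Lambda\in E$, and supplying the required maximality of $E$-pieces in the order.
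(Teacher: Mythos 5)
Your overall strategy is the one the paper itself gestures at (it offers no proof beyond invoking the filtration of {\cite[Proposition 1.1]{prz}}), and the second half of your argument --- realizing $W^u(\Gamma)\cup A$ as the maximal invariant set $\bigcap_n f^n(N)$ of a filtrating sublevel, with the two inclusions checked via forward invariance of $N$ and the $\alpha$-limit of a backward orbit --- is sound in outline (the only sketchy point there is that you should use the finer filtration levels, or the no-cycle condition directly, to see that the backward orbit eventually stays in the layer of a single basic piece before applying the local unstable disk characterization; otherwise the orbit could a priori oscillate among several pieces of $\Gamma\cup A$).

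The genuine gap is exactly at the step you flag as delicate, and your proposed justification for it does not work. You claim that each expanding piece $\Lambda$ is maximal for the order of Definition \ref{order} ``because'' it has a repelling neighborhood $U$ with $\overline U\subset f(U)$ and $\bigcap_n f^{-n}(U)=\Lambda$. What that property gives is only that any point of $W^s(\Lambda)$ is eventually mapped \emph{into} $\Lambda$, so a relation $\Lambda'>\Lambda$ with $\Lambda'\subset\Gamma$ forces $W^u(\Lambda')\cap\Lambda\neq\emptyset$. For a diffeomorphism this is absurd (the backward orbit of a point of $\Lambda$ stays in $\Lambda$), but for an endomorphism a point $z\in\Lambda$ may perfectly well possess a \emph{second}, nonprincipal preorbit that leaves every neighborhood of $\Lambda$ and converges to a saddle piece $\Lambda'$; this creates $\Lambda'>\Lambda$ without creating any cycle (no return chain from $\Lambda$ to $\Lambda'$ is forced), so ``no cycles'' does not rule it out. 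Excluding $W^u(\Gamma)\cap E\neq\emptyset$ is precisely Przytycki's condition (C3) from the introduction, which in this paper is derived from the Strong Transversality Condition in the Remark following Definition \ref{stc} --- i.e.\ it is available in the setting where the lemma is actually applied (Section 5, where conditions (1)--(3) of Theorem \ref{unico} are standing hypotheses), but it is an additional dynamical input, not a consequence of the repelling character of expanding pieces together with the absence of cycles. As written, your ordering of the filtration (all of $E$ above all of $\Gamma\cup A$) is therefore unjustified; to close the argument you must either invoke (C3)/Strong Transversality explicitly, or give a genuine proof that no piece of $\Gamma$ dominates a piece of $E$ --- and the latter is not obtainable from the hypotheses you use.
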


For each point $z\in W^u(\Gamma)$ there exists a preorbit $\{z_n\}$ of $z$ satisfying $d(x_n,z_n)\to 0$, where $\{x_n\}$ denotes
the unique preorbit of $x$ contained in $\Gamma$. It follows that
there exists $k_0$ such that $z_k\in W^u_\epsilon(x_k)$
for every $k\geq k_0$, and therefore one has a well defined space
$$
E^u(\{z_n\})= Df_{z_k}^{k}(T_{z_k}(W^u_\epsilon(x_k)))\subset T_z(M).
$$
But there may exist other preorbits of $z$ converging to $\Gamma$. Let
$\alpha_z$ be the collection of subspaces
$E^u(\{z_n\})\subset T_zM$ indexed by the different preorbits $\{z_n\}$ of $z$ in $W^u(\Gamma)$.\\
Given a finite dimensional vector space $X$, say that a collection of subspaces $\{X_i\}$ of $X$ is in general position if the sum of the codimensions of the $X_i$ is equal to the codimension of $\cap_i X_i$.
\begin{defi}
\label{stc}
An Axiom A map satisfies the Strong Transversality condition if
\begin{enumerate}
\item
The collection $\alpha_z$ is in general position.
If this is the case, the intersection of the $E^u(\{z_n\})$ is denoted by $E^u(z)$,
and we have a well defined $E^u(z)$ for every $z\in M$.
\item
If $z\in W^u(\Gamma)$, then $E^s(z)+E^u(z)=T_z(M)$.
\end{enumerate}
\end{defi}

This definition is intended to assure not only the transversality of intersections between stable and unstable manifolds, but
also between different unstable manifolds. This definition is more restrictive, even if compared with the definition of transversality given in   {\cite[definition 2.3]{br}}that was intended to characterize inverse stability.

When a point $z$ has at least two different preorbits in the unstable set of $\Gamma$, then $z$ is called an unstable intersection. This can also be defined as follows:
\begin{defi}
\label{I}
A point $z\in M$ is an unstable intersection of $f$ if there exist some positive integer $k$ such that $f^{-k}(z)$ contains at least two points in $W^u(\Gamma)$. The set of unstable intersections is denoted by $I=I(f)$. Define also the first unstable intersections of $f$ as the set $I_1=I_1(f)$ of points $z$ for which the above property holds with $k=1$.
\end{defi}
Note that $I$ is a forward but not backward invariant set.\\
\begin{rk}
The Strong Transversality condition implies Przytycki's necessary condition of stability (see (C3) in the introduction).\\
Proof:
Assuming that the Strong Transversality condition holds, that $\Lambda$ is a basic piece and that there exists a point $x\in \Omega\setminus(E\cup\Lambda)$ such that $W^u(x)\cap \Lambda\neq\emptyset$, we must find a contradiction.
As $x\notin E$, then the codimension of $W^u(x)$ is positive.
If $y\in W^u(x)\cap \Lambda$, then $y\in I(f)$, and the collection $\alpha_y$ contains, at least, two elements: one (denoted $\{\bar y\}$) is the preorbit of $y$ contained in $\Lambda$, and another (denoted $\{\bar y_0\}$) converges to the basic piece that contains $x$. On one hand, note that $E^u(\bar y)$ and $E^s(y)$ are complementary subspaces because $\Lambda$ is a hyperbolic basic piece. On the other hand, $E^u(\bar y_0)$ has positive codimension because the basic set that contains $x$ is not expanding. It follows that the sum of the codimensions of $E^u(\bar y)$, $E^u(\bar y_0)$ and $E^s(y)$ is greater than the dimension of $M$. This implies that $f$ cannot satisfy both items of definition \ref{stc}.
\end{rk}

\section{Necessary conditions}

Begin with a $C^1$ structurally stable map $f$. It is clear that $f$ cannot have critical points. It is already known that $f$ is an Axiom A map. It remains to prove that $f$ satisfies the Strong Transversality Condition.
Much of our proof rests on the techniques employed by J.Franks in \cite{F} and in Kupka-Smale Theorem.\\
Let $\mathcal U$ be a neighborhood of $f$ where every map is conjugated to $f$. For each $g\in\mathcal U$, $x\in \Gamma(g)$ and a positive integer $R$, define
$$
W^u_R(x,g)= \bigcup_{k=0}^{k=R}g^k(W^u_\epsilon(x_k,g)),
$$
where $\{x_k\}$ is the unique $g$-preorbit of $x$ contained in $\Gamma(g)$. Define also $W^s_R(x,g)$ as the set of points at distance less than or equal to $R$ in $W^s(x,g)$, where the distance is induced by the Riemannian metric of $M$ in $W^s$: the distance between two points is the infimum of the length of curves joining the points within $W^s$.\\
Note that $\Omega\cap I=\emptyset$, ($I=I(f)$ of Definition \ref{I}): if $x\in I$ then $f^{-k}(x)$ has at least two points in $W^u(\Gamma)$, one of which cannot be contained in $\Omega$, but this contradicts Theorem C of \cite{prz} if $x\in \Omega$.\\

\noindent
{\em Proof of the first part of the Strong Transversality Condition. }\\

\begin{defi}
\label{dK1}
Fix positive integers $n$ and $R$. Given $g\in\mathcal U$ and $z\in W^u(\Gamma(g))\cap I(g)$ denote by $P_z(n,R,g)$ the set of points $w\in W^u(\Gamma(g))$ such that the following conditions hold:
\begin{enumerate}
\item
$w\in W^u_R(q_w,g)$ for some periodic point $q_w\in\Gamma(g)$ of period at most $n$.
\item
$w\in g^{-1}(I_1(g))\setminus I(g)$, and $g^k(w)=z$ for some positive $k=k_w$.
\end{enumerate}
Define $\mathcal K_1(n,R)$ as the set of maps $g\in\mathcal U$ such that the collection of subspaces
$$
\{Dg^{k_w}_w(T_w(W^u(w)))\ :\ w\in P_z(n,R,g)\}
$$
is in general position.
\end{defi}
Note that $T_w(W^u(w))$ is well defined since $w\notin I(g)$.

Using the same techniques applied to prove Kupka-Smale Theorem in the case of diffeomorphisms, one can conclude that for every $n$ and $R$ positive, $\mathcal K_1(n,R)$ is open and dense in $\mathcal U$.

It follows that the set $\mathcal K_1$, intersection of the sets $\mathcal K_1(n,R)$ for $n$ and $R$ positive integers is a residual set in $\mathcal U$.\\

Let $f$ be a $C^1$ structurally stable map, and assume that the first condition of definition \ref{stc} does not hold. Then there exist a point $z\in I$ such that the collection $\alpha_z$ is not in general position. This means that a finite subset $E^u(\{z^1_n\})\cdots , E^u(\{z^r_n\})$ is not in general position. As the preimages of $z$ are wandering and the periodic points are dense in $\Omega$, there exists a perturbation $g$ of $f$ such that for each $1\leq i\leq r$, the sequence $\{z^i_n\}$ belongs to the unstable set of a periodic point $p_i$ of $g$. As $g$ is conjugated to a map in $\mathcal K_1$, the sum of the codimensions of the subspaces $E^u(\{z^i_n\})$ is less than or equal than the dimension of $M$.
In addition, as the subspaces $E^u(\{z^i_n\})$ are not in general position, the arguments of Franks, {\cite[Lemma 2.1]{F}} imply that there exists a perturbation $g_1$ of $g$ such that the unstable manifolds of the periodic points $p_i$ of $g_1$ intersect in a submanifold of codimension less than the sum of their codimensions, but this contradicts the fact that $g_1$ must be conjugated to a map in $\mathcal K_1$.\\

\noindent
{\em Proof of the second part of the Strong Transversality Condition. }\\

\begin{defi}
\label{dK2}
Given positive integers $n$ and $R$, let $\mathcal K_2(n,R)$ be the set of maps $g\in\mathcal K_1(n,R)\cap \mathcal U$ such that the intersection of the subspaces $Dg^{k_w}_w(T_w(W^u(w)))$ for $w\in P_z(n,R,g)$ is transverse to $E^s(z)$ whenever $z$ belongs to $W^s_R(p,g)$ and $p$ is a periodic point of $g$ whose period is at most $n$.
\end{defi}

\begin{lemma}
\label{lK2}
For every $n$ and $R$, $\mathcal K_2(n,R)$ is open and dense in $\mathcal U$.
\end{lemma}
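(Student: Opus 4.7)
The plan is to imitate the classical Kupka--Smale argument for diffeomorphisms, with the extra bookkeeping needed for the endomorphism case, and to use freely the fact (already noted) that $\mathcal K_1(n,R)$ is open and dense in $\mathcal U$ so that in density arguments we may start from a map already in $\mathcal K_1(n,R)$.

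For openness, I would fix $g_0\in\mathcal K_2(n,R)$ and check persistence under a $C^1$-small perturbation $g$. The set of periodic points of period at most $n$ is finite and, because $g_0\in\mathcal U$ is Axiom A and all nearby maps are conjugate to $f$, these points together with their local stable and unstable manifolds of ``size'' $R$ depend continuously in the $C^1$ topology on $g$. The pieces $W^u_R(q,g)$ and $W^s_R(p,g)$ are compact, and in $\mathcal K_1(n,R)$ the relevant unstable pieces meet the finitely many periodic preimage branches only finitely often. Hence the finite list of subspaces $Dg^{k_w}_w(T_wW^u(w))$ and $E^s(z)$ appearing in Definition \ref{dK2} depends continuously on $(g,z)$ over a compact parameter set. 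Transversality of a finite family of linear subspaces is an open condition, and since at $g_0$ the transversality holds uniformly on this compact set, it persists for $g$ sufficiently $C^1$-close, giving $g\in\mathcal K_2(n,R)$.

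For density, I would start with $g\in\mathcal K_1(n,R)\cap\mathcal U$ and construct an arbitrarily $C^1$-close perturbation $g'\in\mathcal K_2(n,R)$. List the finitely many periodic points $p_1,\dots,p_N$ of period at most $n$ and enumerate a countable dense collection of candidate $z\in W^s_R(p_i,g)$. For each such $z$ and each element of the (finite) set $P_z(n,R,g)$, the required condition is the transversality of an explicit finite list of linear subspaces of $T_zM$ with $E^s(z)$. Following Franks \cite{F} and the Kupka--Smale scheme, I would realize this transversality by a local perturbation of $g$ supported in a small neighborhood of a single point chosen on a wandering segment of the orbit between $w$ and $z$ (or on the branch of the $g$-preorbit of $z$ distinct from the periodic one). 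Such a perturbation rotates $Dg^{k_w}_w(T_wW^u(w))$ relative to $E^s(z)$ without altering the other subspaces in the list or the periodic data. Intersecting countably many open dense conditions inside $\mathcal U$ and using that $\mathcal K_1(n,R)$ is itself open, the residual set one obtains lies in $\mathcal K_2(n,R)$, and in particular $\mathcal K_2(n,R)$ is dense.

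The main obstacle, and the reason the argument is not a verbatim copy of the diffeomorphism case, is the endomorphism feature that a single $z$ may possess several distinct preorbits in $W^u(\Gamma)$. To perturb independently the tangent subspaces $Dg^{k_w}_w(T_wW^u(w))$ attached to different $w\in P_z(n,R,g)$, I need to localize the perturbations on pairwise disjoint neighborhoods of well-chosen wandering points, one per orbit branch. The existence of such disjoint wandering neighborhoods is guaranteed by the Axiom A, no-cycles structure and by Lemma \ref{unstable}, together with the observation made in the text that $\Omega\cap I=\emptyset$, so that every relevant $w$ has wandering iterates along its forward orbit to $z$ that can be separated from the orbits of the other $w'\in P_z(n,R,g)$ and from the periodic points involved. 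Once this localization is available, the linear perturbation lemma of Franks applies branch by branch, and a finite composition of such elementary perturbations delivers simultaneous transversality for all the finitely many data at the scale $(n,R)$.
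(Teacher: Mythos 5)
Your openness argument is in the spirit of the paper (which simply declares openness clear), but your density argument both departs from the paper's mechanism and, as written, has a genuine gap. Definition \ref{dK2} requires the transversality for \emph{every} $z\in W^s_R(p,g)$ at which the family $P_z(n,R,g)$ is nonempty, and this set of relevant $z$ is in general an uncountable compact set. Your scheme fixes the transversality only at a countable dense collection of candidate $z$'s and then intersects countably many open dense conditions; but transversality at a dense set of base points does not propagate to all points (the failure locus is closed and can perfectly well avoid any prescribed countable dense set), so the residual set you produce is not contained in $\mathcal K_2(n,R)$. A second, related problem is that your elementary moves are pointwise: a Franks-type rotation of the derivative along the orbit segment between $w$ and $z$ changes the image of the whole compact unstable piece, and a fix at one $z$ can create or destroy non-transverse intersections of the same pair of compact pieces at other points; a finite composition of such pointwise repairs therefore does not obviously terminate in a map satisfying the condition everywhere. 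What is needed (and what the classical Kupka--Smale scheme actually provides) is a perturbation making the \emph{whole} compact pieces transverse simultaneously, not a sequence of repairs at selected intersection points.

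The paper's own density argument is different and sidesteps both issues: since $z\in I(g)$ is wandering ($\Omega\cap I=\emptyset$), one perturbs $g$ in a small wandering neighborhood of $z$. Such a perturbation leaves every unstable manifold, every subspace $Dg^{k_w}_w\bigl(T_w(W^u(w))\bigr)$ and all periodic data untouched (near $z$ these unstable germs are images of the dynamics at the preimages of $z$, which lie outside the support), and it moves only the stable manifold through $z$; one then makes this single stable manifold transverse to the fixed finite collection of unstable subspaces in one stroke, for all intersection points in the relevant compact region at once. If you want to keep your ``rotate the unstable side'' mechanism, you would have to replace the countable-dense-$z$ bookkeeping by a genuine parametric transversality (Sard-type) argument over the compact pieces $W^u_R(q_w,g)$ and $W^s_R(p,g)$, and additionally verify that the rotations do not destroy the general position already guaranteed by $\mathcal K_1(n,R)$; as the proposal stands, these steps are missing.
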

\begin{proof}
It is clear that the property is open and that the collection of subspaces $Dg^{k_w}_w(T_w(W^u(w)))$ for $w\in P_z(n,R,g)$ is transverse if $g\in\mathcal K_1(n,R)$. A perturbation supported in a neighborhood of $z$ will produce no changes in unstable manifolds but will make the stable manifold through $z$ transverse to the collection of subspaces.
\end{proof}

It follows that the intersection $\mathcal K_2$ of the $\mathcal K_2(n,N)$ is a residual subset of $\mathcal U$.

To prove the second assertion of the Strong Transversality Condition, assume by contradiction that there are points $x\in \Omega(f)$ and $y\in W^s(x)\cap  W^u(\Gamma)$ such that $E^s(y)$ and $E^u(y)$ are not transverse. We can assume, as above, that $x$ is a periodic point of $f$, and also that every $w\in P_y$ belongs to the unstable manifold of a periodic point. If the sum of the dimensions of $E^s(x)$ and $E^u(y)$ is greater than or equal to the dimension of the ambient manifold, one can produce a perturbation $g$ such that, if $W^u(x)$ is defined as
$$
W^u(x)=\cap_{w\in P_y}Dg_w^{n_w}(E^u(w)
$$
then $W^u(x)\cap W^s(x)$ contains a disc of codimension less than the sum of the codimensions of $W^s(x)$ and $W^u(x)$, which constitutes an obstruction to the equivalence with a map of $\mathcal K_2$.
If the sum of the dimensions is less than the dimension of $M$, then a perturbation in $\mathcal K_2$ will not be equivalent to $f$.

\section{Example}

The product of two $C^1$ stable maps cannot be $C^1$ $\Omega$-stable, unless both maps are diffeomorphisms or both are expanding. Indeed, if a stable map is not a diffeomorphism, then it has a non injective expanding basic piece, and if a stable map is not expanding, then it has a nontrivial attractor. The product of an expanding basic piece times an attractor is a basic piece that is neither expanding nor injective.
The example of Przytycki (see the final section of \cite{prz}) is a $C^0$ perturbation of the product map $(s,t)=(f_1(s),f_2(t))$, where $f_1$ and $f_2$ are maps of the circle which graphs are shown in figure two.
We will consider the two-torus as the product $\{(s,t):\ s\in[-\pi,\pi],\ t\in[-\pi,\pi]\}$, where $-\pi$ and $\pi$ are identified.

The map $f_1$ is a diffeomorphism having an attractor at $s=0$ and a repeller at $s=\pi$. The map $f_2$ is a $C^0$ perturbation of $z\to z^2$ ({\em derived from expanding} map). The nonwandering set of $f_2$ is the union of an attracting fixed point at $t=0$ and an expanding Cantor set $K_1$. The set $K$ defined as $\{0\}\times K_1$ is a hyperbolic isolated transitive saddle type set, but the restriction of the map to this set is not injective.\\
Przytycki proposed to perturb the product map as follows:
\begin{equation*}
f(s,t)=(f_1(s)+\sin(t)\varphi(s), f_2(t)),
\end{equation*}
where $\varphi$ is a smooth function satisfying two more assumptions. For every $s$ it holds that $0\leq \varphi(s)\leq \varepsilon$. There exists a constant $s_0$ such that $\varphi(s)=\varepsilon$ for $|s|<s_0$, and $\varphi(s)=0$ in $|s-\pi|<s_0$. The absolute value of the derivative of $\varphi$ is also smaller than $\varepsilon$. At each $s$, $|\varphi'(s)|<|f_1'(s)|$.

Other conditions are assumed for the maps $f_1$ and $f_2$. Both $f_1$ and $f_2$ are odd functions of $[-\pi,\pi]$. In a neighborhood of $0$, the function $f_1$ is equal to $s\to \lambda s$, where $0<\lambda<1$ is very small. The map $f_2$ has fixed points at $0$, $\delta$ and $-\delta$. Moreover $f_2$ is taken in such a way that the preimage of $\delta$ is $-\pi+\delta/3$ (so the preimage of $-\delta$ is $\pi-\delta/3$). The derivative of $f_2$ in $|t|>\delta$ is bigger than two. The constant $\varepsilon$ is taken sufficiently small; and once $\varepsilon$ is chosen, the constant $\lambda$ is taken still smaller.

\begin{figure}[ht]
\centering
\psfrag{-1}{$-\pi$}\psfrag{a}{$-\delta$}\psfrag{01}{$0$}
\psfrag{b}{$\delta$}\psfrag{1}{$\pi$}
\psfrag{p}{${{graph \ of \ f_2}}$}
\psfrag{q}{${{graph \ of \ f_1}}$}

\psfrag{pi}{$\pi$}\psfrag{0}{$(0,0)$}
\psfrag{r}{$R$}
\psfrag{b1}{$B_1$}\psfrag{b0}{$B_{0}$}\psfrag{delta}{$\delta$}
\psfrag{-delta}{$-\delta$}
\psfrag{delta0}{$s_0$}
\psfrag{-delta0}{$-s_0$}
\psfrag{-pi}{$-\pi$}
\psfrag{fr}{$f(R)$}
\psfrag{z}{$\mbox{The picture shows}$.}
\psfrag{z2}{$U=[-s_0,s_0]\times[-\pi ,\pi] \mbox{ and } f(U)$.}

\psfrag{z1}{ \mbox{The Shadowed region is } $f(R)$.}

\subfigure[]{\includegraphics[scale=0.201]{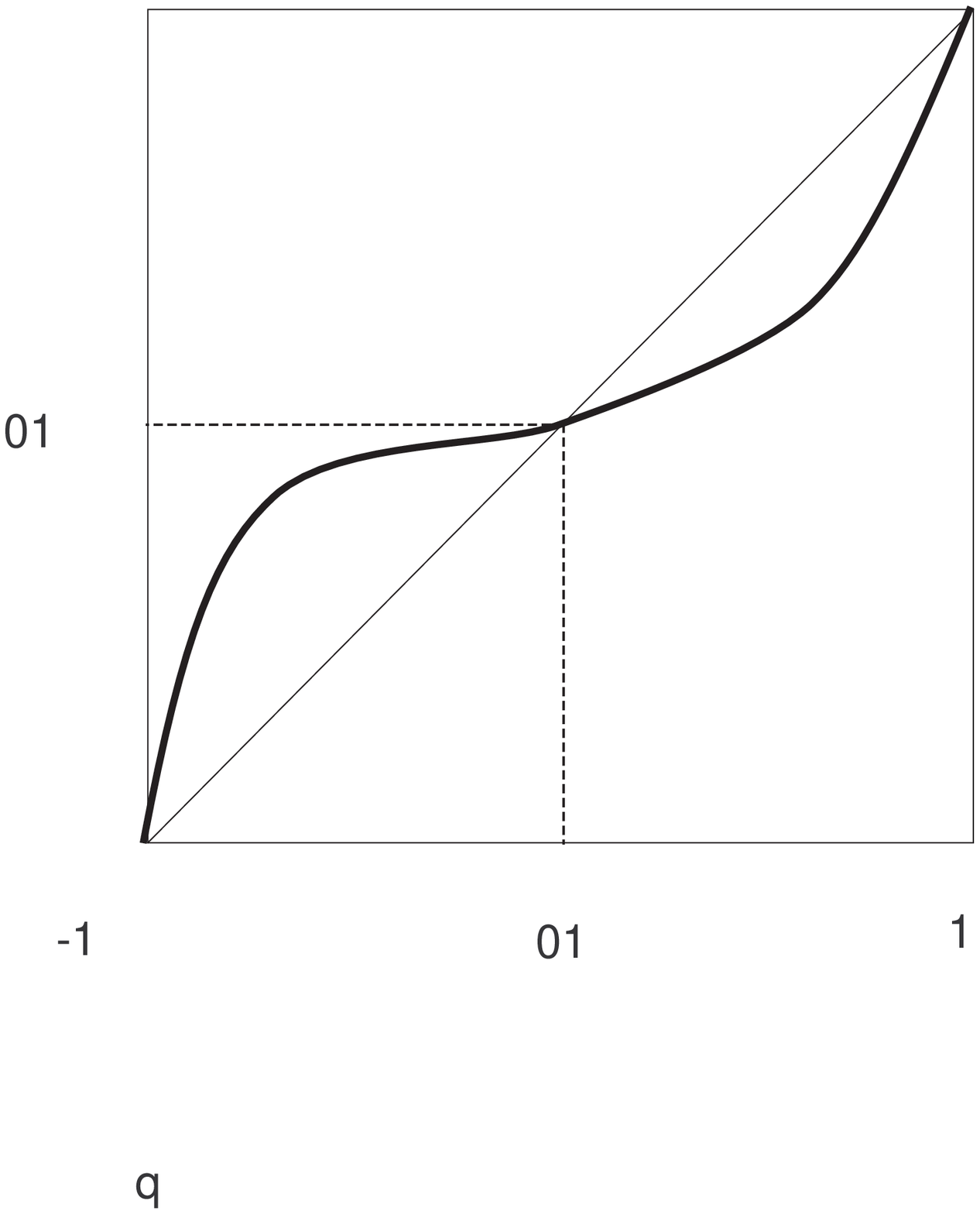}}
\subfigure[]{\includegraphics[scale=0.21]{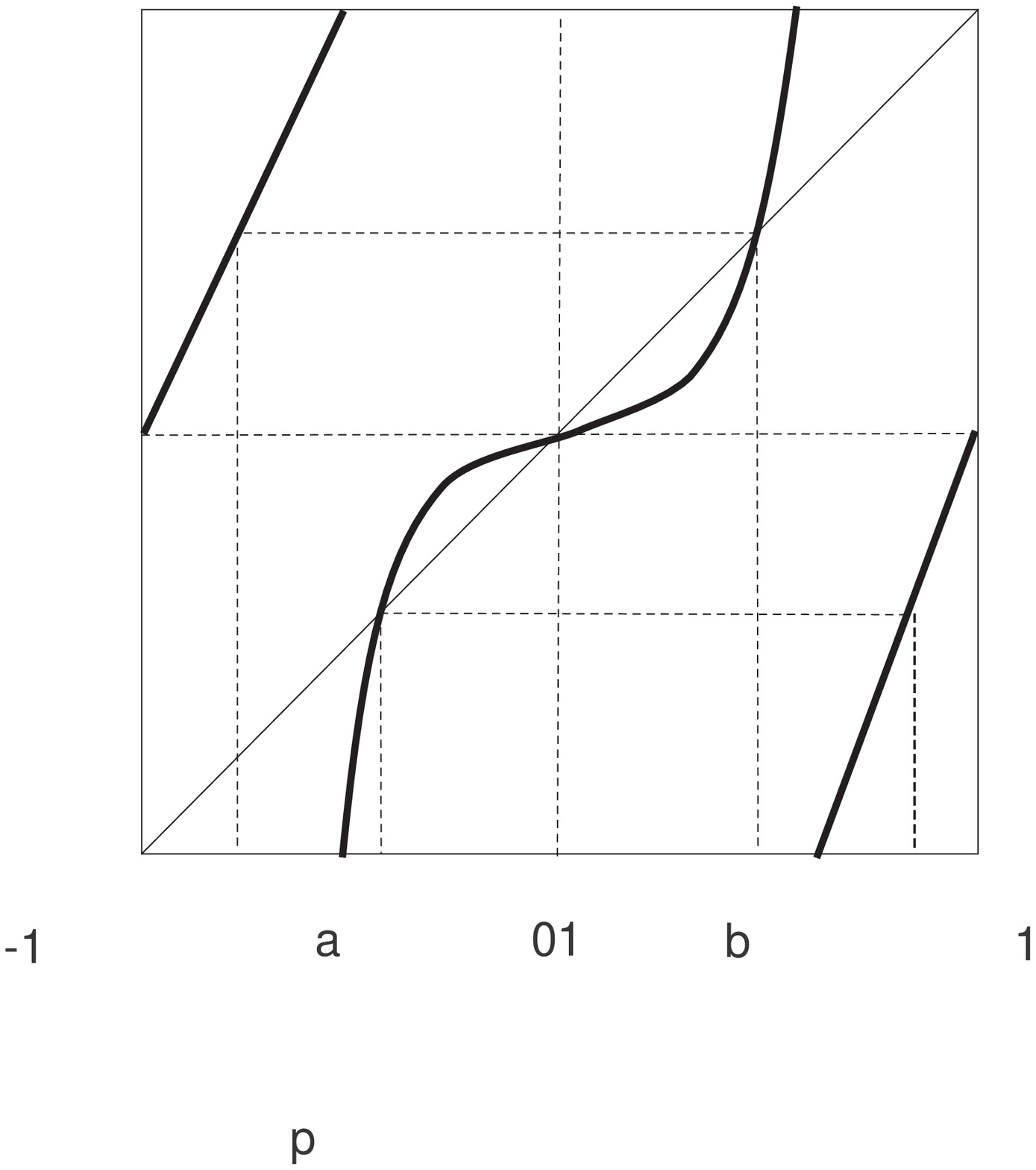}}
% \subfigure[] {\includegraphics[scale=.133]{}}
\caption{\label{sillaac}}
\end{figure}
\begin{figure}[ht]
\psfrag{pi}{$\pi$}\psfrag{0}{$(0,0)$}
\psfrag{r}{$R$}
\psfrag{b1}{$B_1$}\psfrag{b0}{$B_{0}$}\psfrag{delta}{$\delta$}
\psfrag{-delta}{$-\delta$}
\psfrag{delta0}{$s_0$}
\psfrag{-delta0}{$-s_0$}
\psfrag{-pi}{$-\pi$}
\psfrag{fr}{$f(R)$}

\begin{center}
\includegraphics[scale=.133]{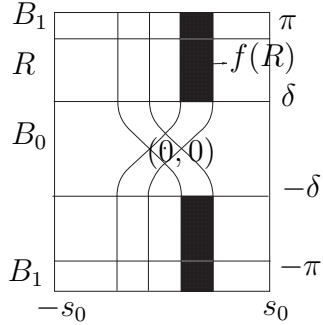}
\end{center}
\caption{\label{rot0.5} The picture shows $U=[-s_0,s_0]\times[-\pi ,\pi]$ and $f(U)$. The Shadowed region is $f(R)$.}
\end{figure}

\begin{prop}
\label{p1}
The map $f$ satisfies the necessary conditions for $C^1$ stability stated in Theorem \ref{unico}.
\end{prop}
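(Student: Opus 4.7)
The plan is to verify the three conditions of Theorem \ref{unico} for $f(s,t)=(f_1(s)+\sin(t)\varphi(s),f_2(t))$ in turn.

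For the absence of critical points, a direct computation yields
\[
Df_{(s,t)}=\begin{pmatrix} f_1'(s)+\sin(t)\varphi'(s) & \cos(t)\varphi(s)\\ 0 & f_2'(t)\end{pmatrix},
\]
so $\det Df=(f_1'(s)+\sin(t)\varphi'(s))f_2'(t)$. The hypothesis $|\varphi'(s)|<|f_1'(s)|$ forces the first factor to be nonzero (recall that $f_1$ is a diffeomorphism of the circle, so $f_1'$ has constant sign), and $f_2$ may be chosen a smooth covering of degree two with $f_2'>0$ throughout, compatible with the bound $f_2'>2$ on $|t|>\delta$.

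For Axiom A, I would identify the four basic pieces of the unperturbed product $f_1\times f_2$: the sink $(0,0)$, the saddle $(\pi,0)$, the saddle basic set $K=\{0\}\times K_1$, and the expanding set $R=\{\pi\}\times K_1$. Since $\varphi\equiv 0$ on $|s-\pi|<s_0$, the two pieces over $s=\pi$ are untouched by the perturbation, and the standard persistence of hyperbolic sets for a $C^1$-small perturbation gives, for $f$, a sink $\tilde p_0$ near $(0,0)$ and a hyperbolic saddle basic piece $\Lambda$ near $K$. To rule out extra nonwandering points I would construct a filtration adapted to the partial order $R>\{(\pi,0),\Lambda\}>\tilde p_0$, using that the strip $|s|<s_0$ is sent strictly into itself since the first coordinate of $f(s,t)$ is at most $\lambda s_0+\varepsilon<s_0$. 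A standard escape argument then forces every wandering orbit into one of the listed basins.

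The main work lies in verifying the Strong Transversality condition. I would proceed in three steps. First, build an invariant cone field $C^u$ around the nearly vertical unstable direction of $\Lambda$ together with a complementary cone field $C^s$ around the horizontal direction: in $|s|<s_0$ the block-triangular form of $Df$ and the domination $f_2'(t)>\lambda$ yield forward invariance of $C^u$ and backward invariance of $C^s$; in $|s-\pi|<s_0$ the pure product structure makes this immediate; in between, the hypothesis $|\varphi'|<|f_1'|$ bridges the two by continuity. Consequently every tangent to $W^u(\Lambda)$ lies in $C^u$, every tangent to $W^u((\pi,0))$ lies in $C^s$, and every tangent to a stable manifold lies in $C^s$. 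Second, in dimension two, general position for $1$-dimensional subspaces amounts to having at most two of them and no two coinciding. Mixed-origin pairs (one preorbit from $W^u((\pi,0))$ and another from $W^u(\Lambda)$) give tangents in the disjoint cones $C^s$ and $C^u$, hence automatic transversality. The subtle case is two or more preorbits of $z$ lying in $W^u(\Lambda)$: their tangents all sit inside $C^u$, and I need to show that there are at most two distinct ones and that they are non-parallel. A slope comparison along the different $s$-histories $\{s_n\}$ and $t$-histories $\{t_n\}$, governed by the terms $\sin(t_n)\varphi(s_n)$, yields a quantitative separation of slopes once $\lambda\ll\varepsilon$, bounding the multiplicity of self-intersections of $W^u(\Lambda)$ and providing the required linear independence. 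Third, for part (2) of Definition \ref{stc}, the stable sets of $\tilde p_0$ and of $R$ are open, so transversality with any $E^u\subset C^u$ is automatic, while the one-dimensional stable manifold of the saddle near $(\pi,0)$ has tangent in $C^s$, which is disjoint from $C^u$.

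The main obstacle is the slope-separation estimate for branches of $W^u(\Lambda)$ meeting at a common point: both tangents lie inside the same narrow cone $C^u$, so soft cone arguments alone give no lower bound on their angular separation. The argument must exploit the explicit form of the perturbation $\sin(t)\varphi(s)$ together with the hierarchy $\lambda\ll\varepsilon$: the strong $s$-contraction localizes the difference between two preorbits to their most recent iterate, where different values of $\sin(t)$ convert into a quantitative gap in tangent slopes. This is the ingredient where Przytycki's careful choice of constants plays a decisive role.
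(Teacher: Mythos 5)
Your verification of condition (1) is fine, but there are two genuine gaps. First, ``Axiom A'' in this paper is Definition \ref{def1}, which includes (A3): every basic piece is either expanding or the restriction of $f$ to it is injective. This is precisely where the unperturbed product $f_1\times f_2$ fails (the restriction to $\{0\}\times K_1$ is two-to-one), and it is the whole reason for the skew term $\sin(t)\varphi(s)$. Consequently your appeal to ``standard persistence of hyperbolic sets'' cannot close this step: persistence (for endomorphisms, via the inverse limit) yields hyperbolicity of a continuation near $\{0\}\times K_1$, but it can never upgrade a non-injective piece to an injective one; injectivity must be extracted from the explicit perturbation. The paper does this by locating the fixed points $A=(\sigma,\delta)$, $-A$ with $\sigma=\varepsilon\sin\delta/(1-\lambda)$, showing that the forward-invariant region around the saddle piece is bounded by almost-vertical unstable curves, and checking that (for $\lambda$ small) the image of the part of this region with $t$ in the upper half of the circle lies in $\{s>0\}$ while the symmetric part maps into $\{s<0\}$; hence $f$ restricted to $\Gamma_0$ is injective. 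Your proposal never addresses (A3), so condition (2) of Theorem \ref{unico} is not established.

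Second, for the Strong Transversality Condition you explicitly leave the decisive estimate --- transversality of self-intersections of $W^u(\Gamma_0)$ --- as ``the main obstacle,'' offering only a heuristic about localizing the difference of preorbits at the last iterate. That step is the substantive content of condition (3), so the proposal is incomplete there; moreover the paper's actual mechanism is different from the delicate ``slope separation inside one narrow cone'' you anticipate. One first observes that unstable intersections occur only in the immediate basin $B_0$ of $(0,0)$ and that it suffices to treat $I_1$ (Definition \ref{I}): a point of $I_1$ has one preimage in $B_0$ and the other in $B_1$, so the two intersecting branches arrive through different $t$-regions. The branch coming through $|t\pm\pi|<\delta$ has image with tangent $W_1=(\lambda\alpha'(t)+\varepsilon\cos t,\,f_2'(t))$, whose first component is negative since $\cos t\approx-1$ and $\lambda\ll\varepsilon$, while the branch already in $B_0$, parametrized with $t'$ near $0$, produces $W_2=(\lambda\beta'(t')+\varepsilon\cos t',\,f_2'(t'))$ with positive first component; since both second components are positive, the two directions are distinct and the intersection is transverse. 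In short, the required separation is a sign argument exploiting $\cos t$ at the two different preimages, not an angular gap between two vectors in a common unstable cone, and without it (and without the identification of $I_1$ with points having one preimage in each of $B_0$, $B_1$) your verification of condition (3) is not complete.
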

\begin{proof}
{\bf (1) $f$ is a covering. } If $\varepsilon$ is small enough, then the derivative of the first coordinate with respect to $s$ never vanishes. This implies that $f$ is a local homeomorphism. As every point has exactly two preimages, the claim follows.\\
{\bf (2) $f$ satisfies the Axiom A. } The proof below will show that the nonwandering set of $f$ consists of the union of the following basic pieces:
\begin{enumerate}
\item
An attracting fixed point $p=(0,0)$.
\item
A saddle type fixed point $C=(\pi,0)$.
\item
An expanding set $K=\{\pi\}\times K_1$.
\item
A saddle type basic piece $\Gamma_0$ close to $\{0\}\times K_1$.
\end{enumerate}
That the first three are basic pieces is clear. Towards the proof of the existence, hyperbolicity and injectivity of $\Gamma_0$,
it will be necessary to construct invariant cone fields in the region $R=\{(s,t):|s|<s_0,\ |t|\in (\delta,\pi-\delta)\}$.
Note first that $f_1(\sigma)+\varphi(\sigma)\sin \delta=\sigma$ has the solution $\sigma=\epsilon\sin\delta/(1-\lambda)<s_0$. It follows that the points $A=(\sigma,\delta)$ and $-A=(-\sigma,-\delta)$ are fixed for $f$.

Assume that $v=(v_1,v_2)$ is a vector satisfying $|v_1/v_2|<\rho$. If $(w_1,w_2)=Df_X(v)$, where $X$ belong to the region $R$, then $|w_1/w_2|<\lambda\rho/2+\varepsilon/2$, where we have used that the derivative of $f_2$ is greater than $2$ and that $\varphi=\varepsilon$ in that region. If $\rho$ is taken greater than $\varepsilon/(2-\lambda)$ then the region $R$ admits an expanded almost vertical invariant cone field (because $s_0$ is greater than $\sigma+\rho\pi$ if $\varepsilon$ is sufficiently small). The same holds for the region $R'=\{(s,t):(s-t)\in R\}$.
The horizontal lines form a contracting invariant foliation in $R\cup R'$. Therefore the set $\Gamma_0$ defined as the set of points having a whole orbit contained in $R\cup R'$ has a hyperbolic structure; it follows also that $\Omega$ is hyperbolic and that $\Gamma_0$ is a basic piece. To obtain Axiom A it remains to prove the injectivity in $\Gamma_0$.
Note that the intersection of the future images of $R$ is a region bounded by segments of the unstable manifolds of the fixed points in $\Gamma_0$,  $A$ and $-A$. These are curves of the form $(\gamma(t),t)$ for $t\in [\delta,\pi-\delta]$, where $\gamma(t)$ is at a distance less than $\rho\pi$ from the first coordinate of $A$ or $-A$.
In any case it is easily seen that the image of a curve like that must be contained in $\{(s,t):s>0\}$, if $\lambda$ is taken sufficiently small. Analogously, the image of the region $R'$ is contained in $\{s<0\}$ by symmetry. It follows that the restriction of $f$ to $R\cup R'$ is injective.\\
{\bf (3) Strong Transversality Condition. } Obviously, stable and unstable manifolds intersect transversally. Moreover, the intersections of unstable manifolds occur in $B_0$, the immediate basin of $(0,0)$. It remains to show that these intersections are transverse. It is sufficient to prove that the intersections in $I_1$ (see section 2, definition 4) are transverse. A point $z$ belongs to $I_1$ if and only if one of the preimages of $z$ belongs to $B_0$ and the other belongs to $B_1$, where $B_1$ denotes the component of $f^{-1}(B_0)$ that is not $B_0$. The unstable manifolds of points in $\Gamma_0$ are almost vertical in $R\cup R'\cup B_1$. Note that if $\gamma(t)=(\alpha(t),t)$ is almost vertical, where $|t\pm \pi|<\delta$, then the image under $f$ of $\gamma$ is a curve contained in the immediate basin of $(0,0)$ and its tangent vector is $W_1=(\lambda\alpha'(t)+\varepsilon\cos t, f_2'(t))$. It can be parametrized as $\tilde\gamma(t')=(\beta(t'),t')$, with $t'$ now varying in $[-\delta,\delta]$. The image under $f$ of a curve of the form of $\tilde\gamma$ has tangent vector $W_2=(\lambda\beta'(t')+\varepsilon\cos t',f_2'(t'))$. As $t$ is close to $\pi$ and $t'$ close to $0$ the signal of the first coordinate of $W_1$ and $W_2$ are different if $\lambda$ is sufficiently small. But as the second coordinates are positive we conclude that these curves intersect transversally.
\end{proof}
%\begin{figure}[ht]
%\psfrag{pi}{$\pi$}\psfrag{0}{$(0,0)$}
%\psfrag{r}{$R$}
%\psfrag{b1}{$B_1$}\psfrag{b0}{$B_{0}$}\psfrag{delta}{$\delta$}
%\psfrag{-delta}{$-\delta$}
%\psfrag{delta0}{$s_0$}
%\psfrag{-delta0}{$-s_0$}
%\psfrag{-pi}{$-\pi$}
%\psfrag{fr}{$f(R)$}
%
%
%\begin{center}
%\includegraphics[scale=.133]{fig35.eps}
%\end{center}
%\caption{\label{rot0.5} The picture shows $U=[-s_0,s_0]\times[-\pi ,\pi]$ and $f(U)$. The Shadowed region is $f(R)$.}
%\end{figure}

\section{Sufficient conditions}
From now on, it is assumed that $f$ satisfies properties $(1)$ to $(3)$ of Theorem \ref{unico}.

We will first construct the conjugacy $h$ in fundamental domains of the periodic attractors.

\subsection{Controlling unstable intersections.}

To begin this section we translate the Strong Transversality Condition to simpler words in the case of dimension two.
Unstable manifolds of basic pieces in $\Gamma$ have dimension
one. This implies that for every positive $k$, and $z\in M$,
the intersection of $f^{-k}(z)$ with $W^u(\Gamma )$ contains at most two points. Then $E^u(z)$ (see definition \ref{stc}) has dimension zero whenever $z\in I$. Moreover, as unstable intersections must be transverse to stable manifolds, it follows that $I$ is contained in the basin of periodic attractors. The next result implies that $I_1$ (the set of first intersections, see definition \ref{I}) is compactly contained in the basin of $A_{per}$, the set of attracting periodic orbits.

\begin{lemma}
\label{closed}
If $f$ satisfies the Strong Transversality Condition, then $I_1$ is a closed set.
\end{lemma}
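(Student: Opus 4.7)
The plan is to take an arbitrary sequence $\{z_n\}\subset I_1$ with $z_n\to z$ and show that $z\in I_1$. First I would extract a convergent subsequence of preimages: because $f$ has no critical points (Theorem~\ref{unico}(1)) on the compact manifold $M$, the map $f$ is a covering of some degree $d$; trivializing $f$ over a small neighborhood of $z$ and passing to a subsequence, I enumerate $f^{-1}(z_n)=\{x_n^1,\dots,x_n^d\}$ and $f^{-1}(z)=\{x^1,\dots,x^d\}$ so that $x_n^k\to x^k$ for each $k$ and the $x^k$ are pairwise distinct. Since $z_n\in I_1$, each $f^{-1}(z_n)$ contains at least two points in $W^u(\Gamma)$; a pigeonhole plus a further extraction produces a fixed subset $S\subseteq\{1,\dots,d\}$ with $|S|\geq 2$ such that, for all $n$, $x_n^k\in W^u(\Gamma)$ precisely when $k\in S$. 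Lemma~\ref{unstable} then forces $x^k\in W^u(\Gamma)\cup A$ for every $k\in S$.

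Next I would specialize to dimension two. An attracting basic piece on a surface has zero-dimensional unstable bundle, so it is a union of attracting periodic orbits; hence $A=A_{per}$, and by Axiom~A~(A3) the restriction $f|_A$ is injective. It follows that at most one of the limits $x^k$ ($k\in S$) can lie in $A$: two distinct such points would lie on the same attracting periodic orbit (because $z$ does) and would share the image $z$, contradicting injectivity. So if $|S|\geq 3$, or if none of the $x^k$ ($k\in S$) lies in $A$, we already have at least two preimages of $z$ in $W^u(\Gamma)$ and $z\in I_1$.

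The delicate case is $|S|=2$ with exactly one limit, say $x^{k_0}$, in an attracting periodic orbit $\Lambda_A$ of period $p$, while $x^{k_1}\in W^u(\Gamma)$. Here I would show $x^{k_0}\in W^u(\Gamma)$ as well via a bridging argument through the periodic orbit. The key observation is that
\[
f^p(x^{k_1})=f^{p-1}\bigl(f(x^{k_1})\bigr)=f^{p-1}(z)=f^p(x^{k_0})=x^{k_0},
\]
so the finite sequence $y_j:=f^{p-j}(x^{k_1})$ for $0\leq j\leq p$ is a valid preorbit of $x^{k_0}$ ending at $y_p=x^{k_1}$. Concatenating with any preorbit of $x^{k_1}$ inside $W^u(\Gamma)$ that converges to a basic piece in $\Gamma$ --- which exists because $x^{k_1}\in W^u(\Gamma)$ --- yields a full preorbit of $x^{k_0}$ converging to $\Gamma$, so $x^{k_0}\in W^u(\Gamma)$. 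Both preimages $x^{k_0}, x^{k_1}$ are then in $W^u(\Gamma)$, so $z\in I_1$.

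The main obstacle I expect is precisely this last case: a priori a limit of $W^u(\Gamma)$ points could land in $A\setminus W^u(\Gamma)$, apparently leaving $z$ with only one preimage in $W^u(\Gamma)$. The periodic-orbit bridging above, which uses $f^p(x^{k_1})=x^{k_0}$ to transfer the preorbit structure of $x^{k_1}$ onto $x^{k_0}$, is what recovers the missing preimage in $W^u(\Gamma)$ and closes the argument.
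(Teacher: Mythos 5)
Your opening extraction step matches the paper's (take the two $W^u(\Gamma)$-preimages $z_n^1\neq z_n^2$ of $z_n$, pass to convergent subsequences, and use Lemma~\ref{unstable} to place the limits in the closed set $W^u(\Gamma)\cup A$). The gap comes right after, in the claim that on a surface every attracting basic piece has zero-dimensional unstable bundle, hence $A=A_{per}$. This is false: an attracting basic piece may be a nontrivial hyperbolic attractor with one-dimensional unstable bundle (the unstable manifolds being contained in the piece, which is what distinguishes $A$ from $\Gamma$ in this paper), and the paper explicitly works with such nonperiodic attractors (subsection 5.5 and Section 6, where Przytycki's example is modified by ``changing the fixed attractor for a non periodic attractor''). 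Your entire treatment of the delicate case hinges on $x^{k_0}$ being a periodic point, since the bridging identity $f^p(x^{k_0})=x^{k_0}$ is what transfers the preorbit of $x^{k_1}$ to $x^{k_0}$. If the limit $x^{k_0}$ falls into a nontrivial attractor, no such $p$ exists, the bridge is unavailable, and you are left with only one known preimage of $z$ in $W^u(\Gamma)$; the proof does not close in that case.

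The symptom worth noticing is that your argument never uses the Strong Transversality Condition, which is the hypothesis of the lemma and exactly what the paper invokes at this point. The paper's route: if one limit belonged to $A$ and the other to $W^u(\Gamma)$, their common image $z$ would lie in $W^u(\Gamma)\cap A$, i.e.\ the unstable set of a saddle piece of $\Gamma$ would meet a non-expanding basic piece, which contradicts Strong Transversality via the Remark after Definition~\ref{stc} (it implies Przytycki's condition (C3)); if both limits were in $A$, injectivity of $f$ on the attracting pieces (Axiom A, (A3)) and disjointness of basic pieces would be violated. Hence neither limit is in $A$, both are in $W^u(\Gamma)$, they are distinct by local invertibility, and $z\in I_1$ directly from Definition~\ref{I}. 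This argument covers nontrivial attractors uniformly and should replace your periodicity bridge; note also that even in your periodic case the bridge produces a point of $W^u(\Gamma)\cap A$, which itself contradicts Strong Transversality, so that case was vacuous --- another sign that the transversality hypothesis, not periodicity, is the ingredient that excludes limits in $A$.
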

\begin{proof}
Let $\{z_n\}$ be a sequence of points in $I_1$ that converges to $z$, and for each $n$ let $z_n^1\neq z_n^2$ be points in $W^u(\Gamma)$ such that $f(z_n^1)=f(z_n^2)=z_n$. Assume that for $i=1,2$ the sequences $\{z_n^i\}$ converge to points $z^1$ and $z^2$, that must be different since $f$ is locally invertible. Then $z^1$ and $z^2$ belong to $W^u(\Gamma)\cup A$ as this is a closed set. But the strong transversality condition implies that neither $z^1$ nor $z^2$ can belong to $A$. Then $z=f(z^1)=f(z^2)$ belongs to $I$, and again by the strong transversality condition, $z\in I_1$.
\end{proof}

\begin{lemma}
\label{lemmaS}
Given a neighborhood $U$ of $I_1(f)$ and $\delta >0$, there exists a neighborhood $\mathcal U$ of $f$, and a positive integer $k$, such that, for every $g\in \mathcal U$, the following properties hold:
\begin{enumerate}
\item
The set of intersections $I_1(g)$ is contained in $U$ and $g^{-k}(I_1(g))\cap W^u(\Gamma(g))\subset W^u_\delta(\Gamma(g))$.\\
\item
There exists $\rho>0$ such that, if $x$ and $y$ are different points in $\Gamma(g)$, then the distance between two different points in $g^k(W^u_\delta(x,g))\cap g^k(W^u_\delta(y,g))$ is greater than $\rho$.
\end{enumerate}
\end{lemma}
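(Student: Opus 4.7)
The plan is to prove items (1) and (2) in turn, first establishing each property for $f$ and then passing to $g \in \mathcal U$ by continuity and compactness.

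For the inclusion $I_1(g) \subset U$ in item (1), I would combine Lemma \ref{closed} with the observations at the start of Section 5: $I_1(f)$ is closed, contained in the basin of $A_{per}$, and disjoint from $\Omega(f)$, hence a compact subset of the open set $U$. The assignment $g \mapsto I_1(g)$ is upper semicontinuous: given $z_n \in I_1(g_n)$ with $g_n \to f$ and $z_n \to z$, one selects two distinct preimages $z_n^1, z_n^2 \in W^u(\Gamma(g_n))$ of $z_n$ and extracts (after a subsequence) distinct limits $z^1, z^2$; by Lemma \ref{unstable} these lie in $W^u(\Gamma(f)) \cup A(f)$, and the Strong Transversality Condition rules out the attractor case, forcing $z \in I_1(f) \subset U$. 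This yields $I_1(g) \subset U$ for $g$ sufficiently close to $f$.

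The inclusion $g^{-k}(I_1(g)) \cap W^u(\Gamma(g)) \subset W^u_\delta(\Gamma(g))$ will follow from a fundamental domain argument. Since $I_1(g)$ is disjoint from $A_{per}$, there is a forward trapping neighborhood $V$ of $A_{per}$ with $\overline{V} \cap I_1(g) = \emptyset$ uniformly for $g \in \mathcal U$. Because $\Gamma(g) \subset W^u_\delta(\Gamma(g))$ and $g$ expands along the unstable direction, one has the monotone exhaustion $W^u(\Gamma(g)) = \bigcup_{j \geq 0} g^j(W^u_\delta(\Gamma(g)))$, and the ``fundamental domain'' $F_g := g(\overline{W^u_\delta(\Gamma(g))}) \setminus W^u_\delta(\Gamma(g))$ is compact and disjoint from $\Gamma(g)$, hence contained in the basin of $A_{per}$. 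Compactness then yields a uniform $N$ with $g^N(F_g) \subset V$. Any $w \in W^u(\Gamma(g)) \setminus W^u_\delta(\Gamma(g))$ lies in $g^{j-1}(F_g)$ for some $j \geq 1$, so $g^N(w) \in g^{N+j-1}(F_g) \subset V$, which is disjoint from $I_1(g)$; taking $k := N$ gives the inclusion.

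For item (2) I would fix the $k$ above and argue by compactness. The family $\{g^k(W^u_\delta(x,g)) : x \in \Gamma(g)\}$ is a continuous family of compact $C^1$ embedded disks parametrized by the compact set $\Gamma(g)$; for distinct $x,y$ on distinct unstable leaves, every point of $g^k(W^u_\delta(x,g)) \cap g^k(W^u_\delta(y,g))$ lies in $I(g)$ and is a transverse intersection by Strong Transversality, with angles bounded below uniformly on the compact family. A standard contradiction argument then yields the required $\rho$: if no such $\rho$ existed one could extract sequences $x_n \ne y_n$ in $\Gamma(g)$ and $p_n \ne q_n$ in the intersection with $d(p_n,q_n) \to 0$, producing in the limit either a non-transverse intersection (contradicting Strong Transversality) or two transverse intersections collapsing onto a single point (contradicting the isolated nature of transverse intersections). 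The main obstacle will be the degenerate case in which $x_n$ and $y_n$ accumulate on the same unstable leaf, since then the local disks overlap and intersections need not be discrete; handling this requires the injectivity of $f|_\Gamma$ provided by Axiom A, so that distinct base points yield genuinely distinct preorbits, together with the one-dimensionality of $W^u$ in the two-dimensional setting.
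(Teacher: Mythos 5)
Your handling of item (2) and of the inclusion $I_1(g)\subset U$ is essentially the paper's own argument (a compactness/contradiction argument using that the Strong Transversality Condition is open, plus an upper semicontinuity argument in the spirit of Lemma \ref{closed}), so those parts are fine. The gap is in your fundamental-domain argument for the inclusion $g^{-k}(I_1(g))\cap W^u(\Gamma(g))\subset W^u_\delta(\Gamma(g))$. The step ``$F_g$ is compact and disjoint from $\Gamma(g)$, hence contained in the basin of $A_{per}$'' is unjustified and false in general: a point of $W^u(\Gamma)$ lying outside the local unstable manifolds need not tend to a periodic attractor; it may lie in the stable set of another saddle-type basic piece of $\Gamma$ (heteroclinic intersections $W^u(\Lambda)\cap W^s(\Lambda')$ are perfectly compatible with the Strong Transversality Condition in dimension two, and they are precisely why the unstable foliations of Section 5.4 must be built inductively along the order of Definition \ref{order}), or in the basin of a nonperiodic attractor. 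Such points never enter a small neighborhood $V$ of $A_{per}$, so no uniform $N$ with $g^N(F_g)\subset V$ exists, and your chain ``$w\notin W^u_\delta(\Gamma(g))\Rightarrow g^N(w)\in V\Rightarrow g^N(w)\notin I_1(g)$'' collapses. (A smaller point: $W^u_\delta(\Gamma(g))$ is not open in $M$, so $F_g$ as you define it need not be compact.)

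The statement only requires controlling the $f^k$-preimages of points of $I_1$, and that is how the paper argues: in dimension two each $z\in I_1(f)$ has exactly two preorbits contained in $W^u(\Gamma)$, and each of these converges to $\Gamma$, hence enters $W^u_\delta(\Gamma)$ after finitely many backward steps; compactness of $I_1(f)$ (Lemma \ref{closed}) yields one $k$ valid for all $z\in I_1(f)$ simultaneously. Points of $W^u(\Gamma)\setminus W^u_\delta(\Gamma)$ whose forward orbits do not go to $A_{per}$ are harmless because their iterates never lie in $I_1$, so no trapping-region estimate is needed for them. Once $k$ is fixed for $f$, the property passes to every $g$ in a small neighborhood because only finitely many iterates are involved and both $I_1(g)$ and the local unstable manifolds of size $\delta$ vary continuously with $g$ (local stability of basic pieces, openness of the transversality condition). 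To salvage your approach you would have to restrict attention to the part of $W^u(\Gamma(g))$ that actually maps near $I_1(g)$ under $g^k$, which in effect reduces to the paper's argument.
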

\begin{proof}
Note that the equation in property (1) is satisfied by the map $f$ for some positive $k$, because each point in $I_1(f)$ has exactly two preorbits converging to $\Gamma$ and $I_1(f)$ is compact.
By the local stability of basic pieces it follows that local unstable manifolds of size $\delta$ for the map $g$ are $C^1$ close to those of $f$; moreover, as $k$ is fixed, the assertion holds also for $g$.\\
(2) As the strong transversality condition is open, then $I_1(g)$ is also compact. Assume by contradiction that there exist sequences $\{x_n\}$ and $\{y_n\}$ contained in $\Gamma(g)$ such that for each $n>0$ there exist points $z^1_n\neq z^2_n$, $d(z^1_n,z^2_n)<1/n$ and $z^i_n\in g^k(W^u_\delta(x_n,g))\cap g^k(W^u_\delta(y_n,g))$. Assuming that the sequences $\{z_n^i\}$ converge to a point $z$, a contradiction arises because $z\in I(g)$ turns to be a point of non transversal intersection between unstable manifolds.
\end{proof}

Let $B_0(A)$ denote the immediate basin of the attractor $A$.
To treat with the unstable intersections in $B_0=B_0(A)$, let
$$
L=L(f,p)=\cup_{\ell\geq 0}f^\ell(I_1(f)\cap B_\ell(f,p)),
$$
where $B_\ell=B_\ell(f,p)$ is defined inductively, as follows:
let $B_1=f^{-1}(B_0)\setminus B_0$ and for every $\ell>1$, $B_\ell=f^{-1}(B_{\ell-1})$.
\begin{lemma}
\label{lcompactL}
$L$ is compact and $f(L)\cap L=\emptyset$.
\end{lemma}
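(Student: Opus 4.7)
The plan divides naturally into verifying that $L$ is closed (hence compact, as $M$ is compact) and ruling out any shared point between $L$ and $f(L)$.

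For the compactness half, I would use that $I_1$ is compact by Lemma \ref{closed} and, by the Strong Transversality Condition, contained in the open basin $\bigcup_{\ell\geq 0}B_\ell(f,p)$ of the periodic attractor through $p$. Since $B_0$ is open, the entry-time function $N(x)=\min\{n\geq 0:f^n(x)\in B_0\}$ is bounded on compact subsets of the basin, so it is bounded by some $\ell_0$ on $I_1$. Thus $L=\bigcup_{\ell=0}^{\ell_0}f^\ell(I_1\cap B_\ell)$ reduces to a finite union, and each piece is closed: for a sequence $z_n\in I_1\cap B_\ell$ converging to $z$, closedness of $I_1$ yields $z\in I_1$, while passing the conditions $f^\ell(z_n)\in B_0$ and $f^j(z_n)\notin B_0$ for $j<\ell$ to the limit (using that $B_0$ is open and $z$ itself lies in the basin) forces $N(z)=\ell$. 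Continuity of $f^\ell$ then gives $f^\ell(z_n)\to f^\ell(z)\in f^\ell(I_1\cap B_\ell)$.

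For $f(L)\cap L=\emptyset$, I would proceed by contradiction. Assume $w\in L\cap f(L)$ and write $w=f^\ell(z)$ with $z\in I_1\cap B_\ell$, as well as $w=f(u)$ with $u=f^{\ell'}(z')\in L$ for some $z'\in I_1\cap B_{\ell'}$. When $\ell\geq 1$, the point $w$ has two distinct preimages in $W^u(\Gamma)$: namely $u\in B_0$ (from the $f(L)$ representation) and $f^{\ell-1}(z)\in B_1$ (from the $L$ representation), which must differ because $B_0\cap B_1=\emptyset$; both lie in $W^u(\Gamma)$ by the forward invariance of unstable sets. Hence $w\in I_1\cap B_0$. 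The case $\ell=0$ reduces to $w=z\in I_1\cap B_0$ directly, with $u$ being one of the two $W^u(\Gamma)$-preimages of $w$ guaranteed by $w\in I_1$.

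The main obstacle I foresee is completing the contradiction, since $w\in I_1\cap B_0$ is not by itself absurd ($w$ already lies in $L$ through the $\ell=0$ summand). The plan is to trace both preimage branches of $w$ backward along $W^u(\Gamma)$ until they reach $\Gamma\cup A$, using that each point of $W^u(\Gamma)$ has a unique preorbit converging to $\Gamma$ (a consequence of Axiom A, in particular injectivity of $f|_{\Gamma\cup A}$) and that the Strong Transversality Condition places $I$ away from $\Omega$. I expect the argument to show that the two chains---one threading back through $u$ and the $z'$-chain, the other through $f^{\ell-1}(z)$ and the $z$-chain---must either collide before reaching $\Gamma$, contradicting local injectivity of $f$ (the no-critical-points hypothesis), or must reach the same point of $\Gamma\cup A$, contradicting the transverse-intersection clause of Definition \ref{stc}. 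This reconciliation of the two preorbit branches is the delicate technical step.
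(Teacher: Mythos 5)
Your first half (compactness) is fine and is essentially the paper's argument: $I_1$ is closed by Lemma \ref{closed}, it avoids $\partial B_\ell$, only finitely many $B_\ell$ meet it, and $L$ is then a finite union of continuous images of compact sets. (One small overstatement: $I_1$ need not be contained in the basin of the single attractor $p$ --- the Strong Transversality Condition only places $I$ in the union of the basins of the periodic attractors --- but your entry-time argument applies verbatim to $I_1\cap\bigcup_\ell B_\ell(f,p)$, which is all that enters the definition of $L$, so this is harmless.)

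The second half has a genuine gap, which you yourself flag. Your reduction is correct: if $w\in L\cap f(L)$ you do get $w\in I_1\cap B_0$, i.e.\ two distinct points of $I_1$ (namely $z$ and $w=f^\ell(z)$, or $z'$ and $w=f^{\ell'+1}(z')$ in the case $\ell=0$) lying on a single forward orbit. But the strategy you propose to finish --- tracing the two preimage branches of $w$ back toward $\Gamma\cup A$ and hoping they either collide (contradicting local injectivity) or reach the same point of $\Gamma\cup A$ non-transversally --- cannot work: two distinct preorbits converging to $\Gamma$, possibly into the same basic piece, is exactly the configuration present at \emph{every} unstable intersection, and nothing in Definition \ref{stc} is violated by it. The ingredient that actually closes the argument is a counting consequence of strong transversality in dimension two, recorded at the start of Section 5.1: since unstable manifolds of pieces of $\Gamma$ are one-dimensional and the collections $\alpha_z$ are in general position, $f^{-k}(x)\cap W^u(\Gamma)$ contains at most two points for every $x$ and $k$; equivalently, the full backward orbit $o^-(x)$ of any point meets $I_1$ in at most one point. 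Your configuration contradicts this: for $\ell\ge 1$, the two $W^u(\Gamma)$-preimages $u_1\neq u_2$ of $z\in I_1$ lie in $f^{-(\ell+1)}(w)$, and pulling the other preimage $u\in B_0$ of $w$ back $\ell$ steps along a preorbit inside $W^u(\Gamma)$ produces a third point of $f^{-(\ell+1)}(w)\cap W^u(\Gamma)$ (it differs from $u_1,u_2$ because its image under $f^\ell$ is $u\in B_0$, whereas $f^\ell(u_i)=f^{\ell-1}(z)\in B_1$); the case $\ell=0$ is identical with $z'$ in place of $z$. This ``at most one $I_1$-point per backward orbit'' fact is precisely how the paper proves $f(L)\cap L=\emptyset$ in two lines: if $x=f^\ell(y)$ with $y\in I_1\cap B_\ell$, then $y$ is the unique point of $o^-(f(x))\cap I_1$, so $f(x)$ cannot also be written as $f^{\ell'}(y')$ with $y'\in I_1\cap B_{\ell'}$ unless $y'=y$ and $\ell'=\ell$, which would force $f(x)=x$, impossible for a point of $I$. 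So the missing idea is this global counting bound, not any statement about where the two preorbits land in $\Gamma\cup A$.
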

\begin{proof}
Note first that as $I$ is contained in $B$, then $I_1\cap \partial B_{\ell}=\emptyset$ for each $\ell\geq 0$. So $I_1\cap B_{\ell}$ is compact for every $\ell$. Moreover, by lemma \ref{closed} there exist at most finitely many values of $\ell$ for which the intersection of $I_1$ with $B_{\ell}$ is nonempty. This implies the first assertion.\\
Notice that $x\in M$ implies $\sharp (o^{-}(x)\cap I_1)\leq 1,$ where $o^-(x)$ denotes the union of all the preimages of $x$. If $x\in L$, then there exists $y\in I_1\cap B_\ell$ such that $f^{\ell}(y)=x$. It follows that $y$ is the unique point in $o^-(f(x))\cap I_1$, hence $f(x)\notin L$ by definition of $L$.
\end{proof}

\subsection{ Fundamental domains.}

Let $A$ be a periodic attractor such that the set of unstable intersections $I$ intersects the basin of $A$.
We can assume that $A$ is a fixed point and take $K$ equal to the closure of a fundamental domain such that $K$ is an annulus and
$L$ is contained in the interior of $K$. A fundamental domain with these properties can be constructed as in the proof of Theorem C in
\cite{ipr1}: there it was shown how to construct a fundamental domain containing in its interior a compact set $L$, provided $f(L)\cap L=\emptyset$.
Let $\partial_-K$ be the connected component of the boundary of $K$ that is closer to $p$, and $\partial_+K$ be the other component of the boundary of $K$ ($\partial_-K$ separates $p$ from $\partial_+K$ in $B_0$).
Let $Q$ be the component of $B_0\setminus K$ that contains $p$. It can also be assumed that for every $x\in Q\cap W^u(\Gamma)$ there exists $j>0$ and $y\in K\cap W^u(\Gamma)$ such that $f^j(y)=x$. This follows from the fact that the transversality condition implies that the preimage of $p$ cannot belong to $W^u(\Gamma)$, so it is possible to take a neighborhood $V_0$ of $p$ where just the points coming through $K$ can enter $V_0$; then one can take $K$ contained in $V_0$, and substitute $L$ by a homeomorphic image of it. Given $\delta >0$,  let $k>0$ be such that
\begin{equation}
\label{equK}
f^{-k}(K)\cap W^u(\Gamma ) \subset W^{u}_{\delta} (\Gamma).
\end{equation}

Observe that the number $k$ can be taken so as to satisfy also the conditions imposed to the number $k$ of lemma \ref{lemmaS}.
For each basic piece $\Lambda$ in $\Gamma$ let $\epsilon_s$ be a small positive number and define a fundamental domain for $\Lambda$:
$$
D(\Lambda)=D_{\epsilon_s}(\Lambda)=W^s_{\epsilon_s}(\Lambda)\setminus f(W^s_{\epsilon_s}(\Lambda)).
$$

\subsection{Local perturbations.}

It is classical that a perturbation can be performed as a finite sequence of perturbations with small supports.
Indeed, if $g$ is a perturbation of $f$ then there exists a diffeomorphism $t$, $C^1$ close to the identity, such that $g=ft$ (take $t(x)$ as the point in $f^{-1}(g(x))$ that is closest to $x$). Given a finite covering $\{W_1,\ldots, W_s\}$ of $M$, and a small perturbation $t$ of the identity, there exist $\{t_1,\ldots,t_s\}$ such that $t=t_1\ldots t_s$ and the support of $t_i$ is contained in $W_i$ (the support of $t_i$ is the closure of the set of points where $t_i(x)\neq x$).  For the proof of this, see \cite{ps}.

We begin taking an appropriate covering of $M$.
If $x\in W^u(\Lambda)$ for a basic piece $\Lambda\subset \Gamma$, then let $W$ be a neighborhood of $x$ compactly contained in the interior of
$$
\cup_{n\geq 0}f^n(V(\Lambda))\cup W^u(\Lambda),
$$
where $V(\Lambda)$ is a small neighborhood of $D(\Lambda)$. It is asked moreover that the closure of $W$ does not intersect $\Omega\setminus \Lambda$. If $x\in A$ then take $W$ compactly contained in the immediate basin of $A$ if $A$ is a nonperiodic attractor, and take $W$ compactly contained in the open set $Q$ defined in 5.2. If $x\notin W^u(\Gamma)\cup A$, then $W$ will be a neighborhood of $x$ such that the closure of $W$ does not intersect $W^u(\Gamma)\cup A$ (recall that the latter is closed by lemma \ref{unstable}).

From now on it is assumed that $g$ is a perturbation of $f$ and that the set of points where $f$ and $g$ are not equal is contained in an open set $W$ as the constructed above. On the other hand, it is clear that $t_i$ converges to the identity when $g$ converges to $f$.
We note that in any case, each attractor of $f$ has a fundamental domain $K$ and each basic piece has a fundamental domain $D(\Lambda )$ whose closures do not intersect that of $W$.

\subsection{Construction of unstable foliations.}

Let $\Lambda$ be a basic piece of $\Gamma$. We will follow the proof of de Melo \cite{dM}
A foliation $\mathcal F_f$ will be defined in a small neighborhood $V(\Lambda)$ of $D(\Lambda)$. Note that $D(\Lambda)$ has no intersections with the unstable set of $\Lambda$, but will certainly intersect unstable leaves of basic pieces that are greater to $\Lambda$ in the order of definition \ref{order}. Then an induction argument is applied: the foliation constructed in $V(\Lambda)$ must contain the leaves that are iterates of those unstable foliations constructed in previous steps and intersecting $D(\Lambda)$.\\
Once this foliation is defined in $V(\Lambda)$, the union of its forward iterates with $W^u_\delta(\Lambda)$ will contain a neighborhood $S_f$ of $\Lambda$. We will fix some positive number $\delta$ such that $S_f\supset W^u_\delta$. This procedure can be repeated with all the basic pieces in $\Gamma$, giving a foliation $\mathcal F_f$ defined in $S_f$.

By virtue of Lemma \ref{closed}, this neighborhood $S_f$ can be asked to satisfy the following properties:
\begin{enumerate}
\item
The restriction of $f$ to $S_f$ is injective.
\item
The intersection of $S_f$ with a basic piece of $f$ not contained in $\Gamma$ is empty.
\item
The intersection of $S_f$ with $I$ is empty.
\end{enumerate}

And as these properties are open, there exist a neighborhood $\mathcal U$ of $f$ such that for every $g\in\mathcal U$ there exists a neighborhood $S_g$ of $\Gamma(g)$ satisfying the properties enumerated above.

\begin{lemma}
\label{lfoliation}
If $g$ is a perturbation of $f$ that coincides with $f$ in a neighborhood of a fundamental domain of $\Gamma$, then there exists a foliation $\mathcal F_g$ defined in $S_g$ and an application $H_g: \mathcal F_f\to \mathcal F_g$ that satisfies the following properties.
\begin{enumerate}
\item Each leaf of $\mathcal F_g$ is an injectively immersed one dimensional manifold, that is contained in $W^u(x,g)$ whenever $x\in\Gamma(g)$ and is transverse to $W^s(\Gamma(g))$. Moreover, the tangent spaces to the leaves vary continuously.
\item For $x\in S_g$, denote by $F_x(g)$ the leaf through $x$.
If $y\in S_g$ and $g^n(x)=y$, then the connected component of $g^n(F_x(g))\cap S_g$ that contains $y$ is contained in $F_y(g)$.
\item
If $F_1$ is a leaf of $\mathcal F_f$, then $H_g(F_1)$ is a leaf of $\mathcal F_g$ that is close to $F_1$ in $C^0$ topology.
\item
$g(H_g(F_x(f)))\supset H_g(F_{f(x)}(f))$, whenever $x$ and $f(x)$ belong to $S_f$.
\end{enumerate}

\end{lemma}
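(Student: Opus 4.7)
The plan is to follow de Melo's inductive construction of adapted unstable foliations, modified to this endomorphism setting by exploiting the three properties of $S_g$ listed just before the lemma (injectivity of $g$ on $S_g$, disjointness of $S_g$ from $I(g)$, and from basic pieces outside $\Gamma$). Enumerate the basic pieces of $\Gamma(f)$ as $\Lambda_1,\ldots,\Lambda_N$ compatibly with the order of Definition \ref{order}, so that $\Lambda_i>\Lambda_j$ forces $i<j$ and maximal elements come first. The hyperbolic continuations $\Lambda_i(g)$ for $g$ close to $f$ inherit the same order because the relation $>$ is open. I construct $\mathcal F_g$ inductively, first on a neighborhood of $\Lambda_1(g)$, then around $\Lambda_2(g)$, and so on, arranging at each stage that leaves built earlier and iterated forward by $g$ into the next $V(\Lambda_i)$ are reused verbatim.

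The inductive step for $\Lambda_i(g)$ has two phases. In the first, I define $\mathcal F_g$ on a small neighborhood $V(\Lambda_i)$ of the fundamental domain $D(\Lambda_i)$. Because the perturbation support is disjoint from the closure of $V(\Lambda_i)$, we have $g=f$ on $V(\Lambda_i)$, so the local unstable manifolds $W^u_\delta(x,g)$ for $x\in\Lambda_i(g)\cap V(\Lambda_i)$ agree with those of $f$ and serve as leaves; through any other point of $V(\Lambda_i)$ I take a one-dimensional $C^0$ leaf transverse to $W^s(\Lambda_i,g)$, with the constraint that leaves passing through points of $V(\Lambda_i)\cap g^m(W^u(\Lambda_j(g),g))$ for $j<i$ coincide with the previously constructed leaves. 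Such an extension exists because, by the Strong Transversality Condition, each incoming leaf hits $W^s(\Lambda_i,g)$ transversely, and because $S_g\cap I(g)=\emptyset$ prevents distinct incoming leaves from sharing a point. In the second phase I extend from $V(\Lambda_i)$ to the remainder of $S_g$ near $\Lambda_i(g)$ by pushing forward under $g$: every such point $x$ lies either in $W^u_\delta(\Lambda_i(g),g)$ (and its leaf is the corresponding local unstable manifold) or satisfies $g^{-m}(x)\cap V(\Lambda_i)\neq\emptyset$ for some minimal $m\geq 1$, in which case $F_x(g)$ is the connected component of $g^m(F_{g^{-m}(x)}(g))\cap S_g$ through $x$. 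Injectivity of $g$ on $S_g$ makes this unambiguous.

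Define $H_g:\mathcal F_f\to\mathcal F_g$ by sending a leaf that arose as $f^m(L)$ from a seed leaf $L\subset V(\Lambda_i)$ to the $\mathcal F_g$-leaf that arose from the same $L$ by $m$ iterations of $g$; since $g=f$ on $V(\Lambda_i)$, the seed leaves of $\mathcal F_f$ and $\mathcal F_g$ inside $V(\Lambda_i)$ coincide and the correspondence is canonical. Property (1) is immediate from the construction: transversality to $W^s(\Gamma(g))$ is built in, tangent spaces vary continuously because $W^u_\delta$ does and the transverse extension in $V(\Lambda_i)$ can be chosen continuously. Property (2) is the content of the forward-iteration extension phase. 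Property (3) holds because the number of $g$-iterates needed to reach any point of a compact subset of $S_f$ from $V(\Lambda_i)$ is uniformly bounded, and $g\to f$ in $C^1$, so corresponding finite iterates of a fixed seed leaf remain $C^0$-close. Property (4) is Property (2) transported through $H_g$: if $F_x(f)=f^m(L)$ then $H_g(F_x(f))=g^m(L)$ and $H_g(F_{f(x)}(f))=g^{m+1}(L)$, so $g(H_g(F_x(f)))\supset H_g(F_{f(x)}(f))$, the inclusion accounting for passage to the connected component in $S_g$.

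The main obstacle is the compatibility clause in the first phase: one must assemble the finitely many already-built leaves arriving at $V(\Lambda_i)$ from basic pieces $\Lambda_j$ with $j<i$ into a continuous one-dimensional foliation of $V(\Lambda_i)$ transverse to $W^s(\Lambda_i,g)$, without crossings or accumulation. This succeeds because $g=f$ on $V(\Lambda_i)$ (so the perturbation introduces no new crossings there), the Strong Transversality Condition forces the incoming leaves to meet $W^s(\Lambda_i,g)$ transversely and at the correct codimension, and $S_g\cap I(g)=\emptyset$ ensures that two distinct incoming leaves never share a point of $V(\Lambda_i)$; together these guarantee the desired extension, and the whole construction goes through basic piece by basic piece.
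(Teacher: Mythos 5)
Your construction follows the same route as the paper: a de Melo--style induction along the order of the basic pieces of $\Gamma$, seeding the foliation on neighborhoods $V(\Lambda_i)$ of the fundamental domains $D(\Lambda_i)$ where $g=f$, imposing compatibility with forward iterates of leaves built at earlier stages, covering a neighborhood of $\Lambda_i(g)$ by forward iterates of these seeds together with $W^u_\delta(\Lambda_i(g))$, and defining $H_g$ through the common seed leaves. Up to that point the two arguments coincide, and your use of the three properties of $S_g$ (injectivity, disjointness from $I(g)$ and from the other basic pieces) to rule out crossings is the right mechanism.

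However, the justification you give for property (3) contains a step that fails as stated: the number of $g$-iterates needed to reach a point of $S_f$ from $V(\Lambda_i)$ is \emph{not} uniformly bounded on compact subsets of $S_f$. Points of $S_f$ arbitrarily close to, but off, $W^u_\delta(\Lambda_i)$ lie in $f^m(V(\Lambda_i))$ only for arbitrarily large $m$, so their leaves are of the form $f^m(L)$ with $m\to\infty$, and $C^1$-closeness of $f$ and $g$ alone gives no control of the $C^0$ distance between $f^m(L)$ and $g^m(L)$ in that regime. The correct argument --- the one implicit in the paper's appeal to de Melo for the continuity of the foliation --- is hyperbolic: for large $m$ both $f^m(L)$ and $g^m(L)$ converge (graph-transform or inclination-lemma estimates) to the local unstable laminations of $\Lambda_i(f)$ and $\Lambda_i(g)$ respectively, and these laminations are close to each other by local stability of the basic piece; only the bounded range of $m$ is handled by plain continuity. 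Relatedly, your $H_g$ is defined only on leaves of the form $f^m(L)$ with $L$ a seed leaf, so it does not cover the leaves contained in the unstable lamination itself (the leaves through points of $\Lambda_i(f)$); for these one must use the conjugacy between $\Lambda_i(f)$ and $\Lambda_i(g)$, as the paper does when it extends $H_g$ ``preserving conjugacy and continuity'', and the continuity of $H_g$ across the lamination rests on the same convergence estimate. With these two repairs your construction matches the paper's proof.
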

\begin{proof}
Until now we have shown the existence of a foliation $\mathcal F_f$ of $S_f$ that satisfies (1) and (2) for the map $f$. Let $\Lambda_1(g)$ be a maximal basic piece and $V(\Lambda_1(g))$ a neighborhood of $D(\Lambda_1(g))$, where $f$ and $g$ coincide. Define $\mathcal F_g=\mathcal F_f$ in $V(\Lambda_1(g))$.
Then a neighborhood of $\Lambda_1(g)$ will be covered by the union of the future iterates of the leaves of $\mathcal F_g$ and $W^u_\delta(\Lambda_1(g))$.
After this, proceed by induction. Assume that $\mathcal F_g$ is defined in $\Lambda_i(g)$ for $1\leq i\leq j-1$ and satisfies (1) and (2). Then define $\mathcal F_g$ in a neighborhood $V(\Lambda_j(g))$ of $D(\Lambda_j(g))$ in such a way that it coincides with leaves that are forward iterates of leaves of $\mathcal F_g$ defined at previous stages and coincides with $\mathcal F_f$ elsewhere.
Again we cover a neighborhood of $\Lambda_j(g)$ taking the union of the future iterates of these foliations with $W^u_\delta(\Lambda_j(g))$. The continuity of the foliation follows in the same way as proved in \cite{dM}.

The application $H_g$ is defined as the identity for leaves of $\mathcal F_f$ contained in $V(\Lambda_1)$. Then, preserving conjugacy and continuity, extend $H_g$ to all the leaves of $\mathcal F_f$ that are contained in a neighborhood of $\Lambda_1$.  Proceed in the same way to cover neighborhoods of the remaining basic pieces.

Defined in this way, it is standard that $\mathcal F_g$  y $H_g$ satisfy the asserted properties.
\end{proof}
Let $\mathcal U$, $\rho$ and $k$ as in Lemma \ref{lemmaS}.

\begin{rk}
\label{rem}
a) If $g\in\mathcal U$, $F_x(g)$ and $F_y(g)$ are distinct leaves of $\mathcal F_g$ and $g^k(F_x(g))\cap g^k(F_y(g))\neq \emptyset$, then the intersection is transverse and the distance between two points in this intersection is at least $\rho/2$.\\
b) For every $x\in S_g$, the cardinality of $g^{-k}(g^k(x))\cap S_g$ is at most two.
\end{rk}

\begin{rk}
\label{importante}
Let $W$ be an open set as constructed at Subsection 5.3. The perturbation $g$ of $f$ coincides with $f$ outside $W$.
Once $W$ is known, one can choose fundamental domains $K$ of the attractors and $D(\Lambda)$ of the basic pieces in $\Gamma$ such that:
\begin{enumerate}
\item $W$ does not intersect $\cup_0^k \partial f^{-j}(K)$.
\item $W\cap V(\Lambda)=\emptyset$ for a neighborhood $V(\Lambda)$ of
the fundamental domain of the basic piece $\Lambda$ (this
validates the hypothesis of lemma \ref{lfoliation}).
\item
The set $W$ does not intersect neither the set $O:=\partial(\cup_{i}^{k} f^{i}(S_f))$, nor $\cup_0^k f^{-j}(O)$.
\end{enumerate}

From $f(W)=g(W)$ and item (3) above, it follows that
\begin{equation*}
\bigcup_{i=0}^{k}f^{i}(S_f)=\bigcup_{i=0}^{k}g^{i}(S_g).
\end{equation*}

\end{rk}

\subsection{Construction of the conjugacy on the basins of the attractors.}

The reader may keep in mind that the intersections of unstable manifolds occur at the basins of the attracting periodic orbits, never
on the basin of a nontrivial attractor (those having unstable manifold of dimension one).

Let $K$ be a fundamental domain of an attracting fixed point and assume that $L$ intersects this basin. The map $f$ is perturbed to a map
$g$ in an open set $W$ that satisfies the requirements of the above subsection.
The construction of the conjugacy begins in this fundamental domain. We can state now the fundamental step.

\begin{lemma}
\label{l0}
There exists a homeomorphism $h$ defined in $K$ such that:
\begin{enumerate}
\item
If $x$ belongs to $\partial_+K$, then $g(h(x))=h(f(x))$.
\item
If $x$ is a point in $S_f$ such that $f^j(x)\in K$ for some $j>0$, then $g^j(H(F_x(f)))$ contains $h(f^j(x))$.
\end{enumerate}
\end{lemma}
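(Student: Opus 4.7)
The plan is to build $h$ on $K$ by first fixing its behavior on the boundary $\partial K$ and on the trace of the unstable lamination $W^u(\Gamma)\cap K$, and then extending it to the complementary regions by two-dimensional topology. By Remark \ref{importante}(1), the support $W$ of the perturbation is disjoint from $\partial f^{-j}(K)$ for $0\le j\le k$, so $f$ and $g$ agree on a neighborhood of $\partial K$; this allows me to set $h=\mathrm{id}$ on $\partial K$, which immediately yields condition (1), since $g(h(x))=g(x)=f(x)=h(f(x))$ for $x\in\partial_+K$.

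Next I would define $h$ on $W^u(\Gamma)\cap K$. By equation \eqref{equK}, every $y\in W^u(\Gamma)\cap K$ equals $f^j(x)$ for some $x\in S_f$ and some $0\le j\le k$. On the $g$ side, the leaf $g^j(H_g(F_x(f)))$ is a well-defined arc that sits $C^0$-close to $f^j(F_x(f))$ by Lemma \ref{lfoliation}, and it crosses $K$ in a segment possessing a canonical point corresponding to $y$ (for instance, by matching a transverse parametrisation initialised on $\partial K$); I would set $h(y)$ equal to that point. If $y\in L$ is an unstable intersection, which by Lemma \ref{lcompactL} lies in the interior of $K$, then two distinct leaves of $\mathcal{F}_f$ cross at $y$; under $H_g$ and forward iteration these produce two $g$-leaves which, by Remark \ref{rem}(a), meet transversally at a single point $y'$ near $y$, and I set $h(y)=y'$. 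Lemma \ref{lemmaS}(2) ensures that distinct intersection points stay uniformly separated, so $h$ is a well-defined, injective, continuous assignment on $(W^u(\Gamma)\cap K)\cup\partial K$.

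Finally I would extend $h$ to the complement. The set $(W^u(\Gamma)\cap K)\cup\partial K$ cuts $K$ into open regions, each bounded by arcs of $f$-leaves and parts of $\partial K$; on the boundary of each such region, $h$ has already been defined as a homeomorphism onto the boundary of the corresponding $g$-region, and a Schoenflies-type argument provides a homeomorphic extension inside. Gluing these extensions yields a global homeomorphism $h$ of $K$ satisfying both (1) and (2), the latter holding by construction since $h(f^j(x))\in g^j(H(F_x(f)))$.

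The main obstacle is controlling the behavior at points of $L$: one must verify that the leafwise definition matches continuously there, and that the $f$-lamination and the $g$-lamination on $K$ have canonically bijective complementary regions under small perturbation. The separation estimates of Lemma \ref{lemmaS}(2), the $C^0$ continuity of the foliations from Lemma \ref{lfoliation}, and the property $f(L)\cap L=\emptyset$ from Lemma \ref{lcompactL} (which prevents cascading interference of intersections under iteration) are precisely what make this matching possible.
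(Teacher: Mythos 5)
There is a genuine gap, and it sits exactly where the lemma is hard. Setting $h=\mathrm{id}$ on $\partial K$ does not make condition (1) ``immediate,'' because it is incompatible with condition (2): the unstable leaves of $\Gamma$ do cross $\partial_+K$, and if $f^j(x)\in\partial_+K$ lies on the $f$-leaf $f^j(F_x(f))$, the perturbed leaf $g^j(H(F_x(f)))$ is only $C^0$-close to it and in general does \emph{not} pass through $f^j(x)$; so (2) forces $h$ to move such boundary points onto the perturbed leaves, and then your verification $g(h(x))=g(x)=f(x)=h(f(x))$ collapses. The actual content of (1) is to make the leafwise assignment equivariant under one iterate across $\partial_+K$, and your later clause ``matching a transverse parametrisation initialised on $\partial K$'' neither specifies the assignment nor gives any reason it commutes with $f$ and $g$. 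The paper resolves precisely this by constructing a continuous transverse field of directions $\chi_g$ on a neighborhood of $f^{-k}(K)\cap S_f$ with the invariance property $Dg_x(\chi_g(x))=\chi_g(g(x))$ (property (3) of the Claim), and defining $h(x)=g^k(y')$ where $y'$ is the intersection of the integral curve $\tilde\chi_g(y)$ through the unique $f^k$-preimage $y\in S_f$ with $H_g(F_y(f))$; the invariance of $\chi_g$ is what yields $g(h(x))=h(f(x))$ on $\partial_+K$.

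Two further points. First, the obstacle you name at $L$ (matching the single-leaf definition with the leaf-intersection definition) is not handled by the lemmas you cite; in the paper it is handled by the defining property of $\chi_g$ on the set $S'$, where $\chi_g(x)=Dg_x^{-k}(Dg_y^k(v_y))$ is the pullback of the tangent direction of the \emph{other} branch's leaf, so that the one-preimage formula limits continuously to the two-preimage formula $h(x)=g^k(H(F_{y_1}))\cap g^k(H(F_{y_2}))$ near $x$ (uniqueness of that intersection via Remark \ref{rem}(a) is the one ingredient you do use as the paper does). Second, the Schoenflies-type extension step is doubtful as stated: $W^u(\Gamma)\cap K$ is a lamination with infinitely many (typically uncountably many) arcs accumulating on one another, so the complement is not a finite union of disks bounded by finitely many arcs, and a region-by-region extension gives no control of continuity in the direction transverse to the lamination. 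The paper avoids this entirely: $h$ is defined on all of $K$ at once, equal to the identity off $\cup_{i\ge 0}f^i(S_f)$ (case of zero preimages in $S_f$), and inside via the local product structure given by $\mathcal F_g$ and the integral curves of $\chi_g$, with continuity at the frontier of $\cup_{0}^{k}f^{i}(S_f)$ checked using that $f=g$ there (Remark \ref{importante}). Your write-up correctly identifies where the difficulty lies, but the mechanism that makes (1), (2) and continuity compatible — the invariant transverse direction field — is missing.
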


Before beginning with the proof of the lemma, we will construct a field of directions in a preimage of $K$ where the intersections of unstable leaves have not occurred yet.\\
Let $k>0$ be as defined in Lemma \ref{lemmaS} and equation \ref{equK} in subsection 5.2:
\begin{equation}
\label{equKa}
f^{-k}(K)\cap W^u(\Gamma)\subset W^{u}_{\delta}(\Gamma)\subset S_f.
\end{equation}

Note that $f^{-k}(K)$ does not contain future iterates of $I(f)$ and each component of $f^{-k}(K)$ is diffeomorphic to $K$. Denote by $K'$ the unique component of $f^{-k}(K)$ that is contained in $B_0$, and by $\partial_\pm K'$ the component of the boundary of $K'$ whose image by $f^k$ is equal to $\partial_\pm K$.

By Remark \ref{importante}, it is clear that $g^{-k}(K)=f^{-k}(K)$ and that the set
$$
S'=\{x\in g^{-k}(K)\cap S_g\ : \mbox{ there exists } y\in S_g
\mbox{ such that } y\neq x \mbox{ and }g^k(x)=g^k(y)\}
$$
does not depend on the perturbation made (although $f^k$ and $g^k$ do not necessarily coincide in $S'$).

\noindent  Given $g$ as above, we will now construct a continuous
field of directions $\chi_g$ defined in a neighborhood $V_1$ of
$g^{-k}(K)\cap S_g$. We will first construct $\chi_f$:

If $x\in S'$, then $\chi_f(x)$ is the unique direction such that
$Df_x^k(\chi_f(x))= Df_y^k(v_y)$, where $v_y$ is the direction
tangent to $F_y(f)$ at $y$, and $y$ as in the definition of $S'$.
Note first that the transversality condition and the construction
of $S_f$, imply that there exists at most one point $y\neq x$,
$y\in S_f$, such that $f^k(x)=f^k(y)$, so $\chi_f$ is well
defined.

Note that $\chi_f$ is continuous in $S'$ and that the transversality condition implies that
$\chi_f$ is transverse to the leaves of the foliation $\mathcal F_f$.

Now take points $x$ and $f(x)$, both contained in $f^{-k}(K)\cap S_f$ and define $\chi_f(x)$ transverse to
the leaf of $\mathcal F_f$ at $x$. Next define $\chi_f(f(x))=Df_x(\chi_f(x))$. Note that as $K$ is a fundamental domain, then
$x$ and $f(x)$ belong to the immediate basin of the attractor, so $x$ belongs to $\partial_+K'$. We have thus defined
$\chi_f$ at three closed disjoint sets: $S'\cap S_f$, $\partial_+K'\cap S_f$ and $f(\partial_+K')\cap S_f$. It is continuous
and transverse to the foliations.
To extend this field of directions to a neighborhood $V_1$ of $f^{-k}(K)\cap S_f$, one can use diverse techniques, for example
the averaging method employed in {\cite[Lemma 1.1]{dM}}.\\

\noindent
The objective now is to define a similar field of directions for $g$ close to $f$; we proceed as above, defining first
$\chi_g$ at a point $x$ in $S'$ as the preimage under $Dg^k_x$ of the image under $Dg^k_y$ of the direction of the leaf
$F_g(y)$, where $y$ is the (unique) point in $S_f$ such that $g^k(y)=g^k(x)$.
Recall that as was explained above, the set $S'$ does not depend on the perturbation. It is also known that $S_f$ does
not depend on the perturbation.
Moreover, as $L$ is interior to $K$, it follows that $S'$ is a neighborhood of $g^{-k}(L)$ contained in the interior of $g^{-k}(K)$.
It follows by the requirements made at Remark \ref{importante} that $\chi_f$ and $\chi_g$
coincide close to the boundary of $S'$. Thus $\chi_g$ can be extended to $V_1$ (where $\chi_f$ is defined), by imposing that
$\chi_g=\chi_f$ outside $S'$.\\

\noindent
For future reference, we state as a claim what we have had about the fields of directions.\\
{\bf Claim.}
There exists an open set $V_1$, neighborhood of $S_f\cap f^{-k}(K)$ such that, for each perturbation $g$ of $f$ that coincides with $f$ outside a set $W$ satisfying the conditions stated in Remark \ref{importante},
there exists a continuous field of directions $\chi_g$ satisfying the following properties:
\begin{enumerate}
\item
If $x$ belongs to $S'$ then $\chi_g(x)=Dg_x^{-k}(Dg_y^k(v_y))$, where $v_y$ is tangent to the leaf of $\mathcal F_g$ through $y$, and $y$ is as in the definition of $S'$.
\item
For every $x\in V_1$, $\chi_g(x)$ is transverse to $F_x(g)$.
\item
If $x$ and $g(x)$ belong to $V_1$, then $Dg_x(\chi_g(x))=\chi_g(g(x))$.
\item
$\chi_f=\chi_g$ outside $S'$.
\end{enumerate}

The integral line of $\chi_g$ through the point $x$ will be denoted by $\tilde \chi_g(x)$.
Note that the leaves of $\tilde\chi$ and the leaves of the foliation $\mathcal F_g$ give a
local product structure at $g^{-k}(K)\cap S_g$, precisely:\\
There exists a neighborhood $V_1$ of $g^{-k}(K)\cap S_g$ and a positive number $\delta_0$
such that, for every pair of points $x$ and $y$ in $g^{-k}(K)\cap S_g$ at a distance less
than $\delta_0$, the intersection of the leaf of $\tilde\chi_g$ through $x$ with the
leaf of $\mathcal F(g)$ through $y$ consists of exactly one point contained in $V_1$.
This intersection varies continuously with $x$ and $y$.\\

\noindent
{\bf{{\em Proof of lemma \ref{l0}.}}}

Recall that if $x\in K$, then, according to item (b) in Remark \ref{rem}, the cardinality of $f^{-k}(x) \cap S_f$ is equal to $0$, $1$ or $2$; define $h$ as follows:\\
1) If it is equal to zero, then $h(x)=x$.\\
2) If it is equal to one, let $y$ be the unique point in $f^{-k}(x)\cap S_f$. Let $y'$ be the unique point of intersection of the integral line of $\chi_g$ ($\chi_f(y)=\chi_g(y)$ because $y\notin S^{'}$) through $y$ with $ H_g (F_y(f))$. Then let $h(x)=g^k(y')$.\\
3) If it is equal to two, and $f^{-k}(x)\cap S_f=\{y_1,y_2\}$, then $h(x)$ is defined as the intersection of the leaves $g^k(H(F_{y_{1}}))$ and $g^k(H(F_{y_{2}}))$ with the ball of radius $\rho /4$ and centered at $x$.\\
To prove that $h$ is well defined, one must consider the third case. The intersection defining $h$ contains exactly one point. If the perturbation $g$ of $f$ is sufficiently small, then property (1) in Lemma \ref{lfoliation} implies that $F_y$ and $H(F_y)$ are close as $C^1$ embeddings, so their $g^k$-iterates are close as well. Moreover, by (a) in Remark \ref{rem}, the intersection defining $h$ is transverse and is unique at a distance less than $\rho/4$ from $x$.\\

Now we will prove the continuity of $h$. It depends on the location of the support of the perturbation. Assume first that $W$ is contained in the exterior of $\cup_{i\geq 0}f^i(S_f)$ (see Remark \ref{importante}). In this case $\mathcal F_f=\mathcal F_g$, $H_g$ is the identity, and consequently $h$ is equal to the identity. The same happens whenever $W$ intersects an attractor.

It remains to consider the case where the closure of $W$ is contained in the interior of $\cup_{i\geq 0}f^i(S_f)$.

Note that $h$ is continuous in the interior of $\cup_{i\geq 0}f^i(S_f)$ by the continuity of the foliations. On the other hand, $h$ is the identity in the complement of $\cup_0^kf^{i}(S_f)$, so it remains to prove the continuity in the boundary. Note that as $f(W)=g(W)$, then $f^k$ and $g^k$ coincide in
$$
\left (\cup_{n\geq 0} f^{n}(S_f) \setminus \cup_{n\geq 0} f^{n} (W)\right )  \cap K.
$$

Let $x$ be a point in the boundary of $(\cup_{i\geq 0}f^{i}(S_f))\cap K$, and $y$ a point in $f^{-k}(x)\cap S_f$. Then $y$ belongs to the boundary of $S_f$, so $F_y(f)=F_y(g)$ and $\chi_f(y)=\chi_g(y)$. Moreover, as $f^j(y)\notin W$ for every $0\leq j\leq k$, then there exists a neighborhood $U_y$ of $y$ such that $f^j(z)=g^j(z)$ for every $z\in U_y$. This implies that $h(x)=x$.

Note that $h$ is injective because if $x\in K$, then the cardinalities of $f^{-k}(x)\cap S_f$ and $g^{-k}(h(x))\cap S_f$ coincide since $f=g$ outside the future iterates of $S_f$.

To prove assertion (1) in the statement of the Lemma, take $x\in \partial_+K$. If $x$ does not belong to the union of the future iterates of $S_f$ then $h(x)=x$ and $h(f(x))=f(x)$, and (1) holds because $g$ and $f$ coincide at both points. In the remaining case, there exists a unique $y\in S_f$ such that $f^k(y)=x$ and we fall in case (2) of the definition of $h$. Moreover, $f(y)$ is the unique $f^k$-preimage of $f(x)$ contained in $S_f$. If $y'$ is the point of intersection of $\tilde\chi_g(y)=\tilde\chi_f(y)$ with $H_g(F_y(f))$, then by definition, $h(x)=g^k(y')$. By virtue of the third property of the fields of directions $\chi$, it comes that the $g(\tilde\chi_g(y))=\tilde\chi_g(g(y))$. In addition, $g(y')$ belongs to $g(H_g(F_y(f)))$ and therefore $g^k(g(y'))=h(f(x))$.

The second assertion of the lemma follows by construction.

\qed

This homeomorphism $h$ can be extended to the whole immediate basin of $p_f$ as follows: if $x\in B_0(p_f)$, then there exists a unique $j\in \Z$ such that $f_{|B_0(p_f)}^j(x)\in K$; then define $h(x)=g^{-j}_{|B_0(p_g)}(h(f^j(x)))$. This new extension of $h$ conjugates the restrictions of $f$ and $g$ to the corresponding immediate basins of the attractors. It is injective and open as $h$ is. Moreover, $h$ carries $p_f$ to $p_g$ and so its image contains a neighborhood of $p$: it follows that it is onto $B_0(p_g)$.\\

When $A$ is a nonperiodic attractor, the restriction of $f$ to a neighborhood of $A$ is injective. Moreover, as there cannot be unstable intersections in its basin, the arguments applied for diffeomorphisms allows to construct a local conjugacy $C^0$ close to the identity defined in the whole immediate basin of the attractor $A$ and satisfying the properties in the statement of Lemma \ref{l0}. Summing up, there exists a conjugacy $h$ from $B_0(f)$ (the union of the immediate basins of the attractors of $f$) onto $B_0(g)$.\\

Finally, we claim that for every point $x\in S_f$ and every nonnegative integer $j$ such that $f^j(x)\in B_0(f)$, it holds that $h(f^j(x))\in g^j(H(F_x(f)))$. Indeed, let $J(x)$ be minimum such that $f^J(x)\in B_0(f)$. Let $i\in \Z$ such that $f^{J+i}(x)\in K$. Note that $i$ is greater than or equal to zero, by the construction of $K$ (see subsection 5.2). Then the claim follows by the second assertion in the previous lemma.
\subsection{ Extension of $h$ to the whole manifold.}
For each point $x\in f^{-\ell}(B_0)$, one has precisely $d^{\ell}$ points in $g^{-\ell}hf^{\ell}(x)$ to choose $h(x)$. Our arguments will show that there exists one of these points closest to $x$.\\
Let $\Lambda_j\subset \Gamma$, $1\leq j\leq n$, $U_j$ a collection of disjoint neighborhoods and denote by $U'$ the union of the $U_j$. Let $U$ be a neighborhood of the attractors.\\ The proof of following lemma is straightforward.

\begin{lemma}
\label{l1}
There exists $N>0$ such that for each $x\in M$, the set
$\{j\geq 0\ :\ f^j(x)\notin U\cup U'\}$ contains at most $N$ elements.
\end{lemma}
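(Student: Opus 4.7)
The plan is to exploit the adapted filtration for $f$ guaranteed by the Axiom A plus no-cycles hypothesis (Proposition 1.1 of \cite{prz}), and bound the number of iterates each orbit spends in each filtration level while avoiding $U\cup U'$ separately. Concretely, I would fix a filtration $\emptyset=M_0\subset M_1\subset\cdots\subset M_r=M$ with $f(\overline{M_i})\subset\operatorname{int} M_i$ such that each block $M_i\setminus M_{i-1}$ contains exactly one basic piece $\Lambda_i$ as its maximal invariant subset. After absorbing, if necessary, any expanding pieces into $U$ (so that $U\cup U'$ is an open neighborhood of $\Omega$), each $\Lambda_i$ has an open neighborhood $V_i\subset U\cup U'$, and the compact set $C_i:=\overline{M_i\setminus M_{i-1}}\setminus V_i$ is disjoint from $\Lambda_i$.

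The key step is to show that for every $i$ there is a uniform integer $\tau_i$ with $f^{\tau_i}(C_i)\subset V_i\cup M_{i-1}$. For a single $y\in C_i$, the maximality of $\Lambda_i$ as invariant subset of $\overline{M_i\setminus M_{i-1}}$ forces the positive orbit of $y$ eventually to leave $\overline{M_i\setminus M_{i-1}}\setminus V_i$; hence some iterate $f^{\tau(y)}(y)$ enters the open set $V_i\cup M_{i-1}$. Continuity of $f$ and openness of $V_i\cup M_{i-1}$ make the dwelling time $\tau$ upper semicontinuous on $C_i$, and compactness of $C_i$ upgrades this to the uniform bound $\tau_i$.

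Once this is established, the trapping property $f(\overline{M_i})\subset M_i$ guarantees that once an orbit crosses from $M_i\setminus M_{i-1}$ into $M_{i-1}$ it never returns to $M_i\setminus M_{i-1}$. Consequently, for any $x\in M$ the set $\{j\geq 0:f^j(x)\in C_i\}$ consists of at most one block of consecutive integers, of length $\leq\tau_i$. Taking $N:=\tau_1+\cdots+\tau_r$ and using $M\setminus(U\cup U')\subset\bigcup_i C_i$ gives the required uniform bound.

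The main obstacle I anticipate is not the dynamical content, which is the standard filtration argument, but the bookkeeping needed to make sure $U\cup U'$ actually covers every basic piece: as stated, $U'$ only neighborhoods of pieces in $\Gamma$ and $U$ is a neighborhood of the attractors, so one must either tacitly enlarge $U$ to contain a neighborhood of $E$ as well, or verify in the context where this lemma is invoked that expanding basic pieces are already absorbed by the neighborhoods in question. Everything else is a routine compactness and continuity argument applied one filtration level at a time.
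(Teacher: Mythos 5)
Your overall strategy (adapted filtration plus compactness, with the expanding pieces absorbed into the neighborhoods --- a point you are right to flag, since the paper's Lemma \ref{l2} explicitly treats the case $\Lambda_j\subset E$, so $U\cup U'$ is indeed meant to be a neighborhood of all of $\Omega$) is the natural one; the paper omits the proof entirely. But there is a genuine gap in your counting step. What your semicontinuity--compactness argument actually proves is that every $y\in C_i$ enters the open set $V_i\cup\operatorname{int}M_{i-1}$ within $\tau_i$ iterates (your literal claim $f^{\tau_i}(C_i)\subset V_i\cup M_{i-1}$ is stronger and does not follow, but that is only a wording slip). From this you conclude that $\{j\geq 0: f^j(x)\in C_i\}$ is a single block of consecutive integers of length at most $\tau_i$. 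That inference fails: what is a single block is $\{j: f^j(x)\in \overline{M_i\setminus M_{i-1}}\}$, because $M_i$ and $M_{i-1}$ are forward invariant; but $V_i$ is not forward invariant inside the level, so within that block the orbit may leave $V_i$ and return to $C_i$ repeatedly. Entering $V_i$ once does not end the visits to $C_i$, so bounding the time of \emph{first} entry into $V_i\cup M_{i-1}$ bounds neither the number of excursions out of $V_i$ nor the total number of indices $j$ with $f^j(x)\in C_i$, which is what the lemma requires.

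The standard repair is to run the compactness argument two-sidedly. Since the filtration is adapted, the maximal invariant set of $\overline{M_i\setminus M_{i-1}}$ is $\Lambda_i$; hence the compact sets $K_T=f^{T}\bigl(\bigcap_{j=0}^{2T}f^{-j}(\overline{M_i\setminus M_{i-1}})\bigr)$ are nested decreasing with $\bigcap_T K_T=\Lambda_i$, so $K_{T_i}\subset V_i$ for some $T_i$. If the orbit of $x$ lies in $\overline{M_i\setminus M_{i-1}}$ exactly for $j$ in a block $[a,b]$, then for every $j\in[a+T_i,\,b-T_i]$ the point $f^j(x)=f^{T_i}(f^{j-T_i}(x))$ lies in $K_{T_i}\subset V_i$; hence at most $2T_i$ indices of that block can lie in $C_i$, and taking $N=\sum_i 2T_i$ (after enlarging the neighborhoods to cover $E$, as you noted) finishes the proof. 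With this replacement your argument is complete; everything else in your proposal (trapping by the filtration, upper semicontinuity, compactness of $C_i$) is sound.
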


Let $\alpha$ be the expansivity constant of the restriction of $f$ to $U\cup U'$ (recall that $f$ is a diffeomorphism from $U_j$ to $f(U_j)$ for $U_j\subset \Gamma$).
Let $\epsilon_0>0$ be such that $f(x)=f(y)$ implies $x=y$ or $d(x,y)> \epsilon_0$. Let $\epsilon<\min\{\alpha/2,\epsilon_0/2\}$.\\
If $U'$ is sufficiently small and $\mathcal F$ is the foliation obtained in Lemma \ref{lfoliation}, then the following additional property holds:\\
There exists constants $\delta>0$, $C>0$ and $\lambda>1$ such that $d(f^n(x)),f^n(y))\geq C\lambda ^nd(x,y)$ whenever $f^j(x)$ and $f^j(y)$ belong to the same leaf of $\mathcal F$ and $d(f^j(x),f^j(y))\leq \delta$ for every $0\leq j\leq n-1$. By changing the metric one can obtain $C=1$.\\
In what follows, $\mathcal U$ will be a neighborhood of $f$ such that the same properties hold for $g$ in $\mathcal U$.

\begin{lemma}
\label{l2}
Let $x$ and $y$ be points in $U'$ such that $f(y)=x$. If $h$ is defined in $x$ and $d(h(x),x)<\epsilon$, then $h$ can be defined in $y$ in such a way that $d(h(y),y)<\epsilon$.
\end{lemma}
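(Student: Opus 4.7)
The plan is to define $h(y)$ as the unique $g$-preimage of $h(x)$ inside a small neighborhood of $y$, using the local invertibility of $g$ (inherited from $f$ being a covering with no critical points), and then to verify the distance bound $d(h(y),y)<\epsilon$ via the covering gap $\epsilon_0$ together with the hyperbolic geometry near $\Gamma$.

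First I would fix a neighborhood radius $r_0>0$, uniform in $y\in M$, on which $f|_{B(y,r_0)}$ is a diffeomorphism onto an image containing a ball of definite radius around $f(y)$; this is available because $f$ has no critical points and $M$ is compact. Shrinking $r_0$ so that $r_0<\epsilon_0/2$ and taking $\mathcal{U}$ small enough, the same holds for every $g\in\mathcal{U}$ with quantitatively close constants. Since $d(h(x),x)<\epsilon$ and $\epsilon$ is smaller than the uniform covering radius, the point $h(x)$ lies in $g(B(y,r_0))$, so
\[
h(y):=\bigl(g|_{B(y,r_0)}\bigr)^{-1}(h(x))
\]
is well defined; it is moreover the only $g$-preimage of $h(x)$ inside $B(y,\epsilon)$, since the other $g$-preimages lie near other $f$-preimages of $x$, all at distance at least $\epsilon_0$ from $y$, and for $g$ close to $f$ and $h(x)$ close to $x$ this separation is preserved.

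The remaining step is the bound $d(h(y),y)<\epsilon$. By the triangle inequality,
\[
d(h(y),y)\le d\bigl((g|_{B(y,r_0)})^{-1}(h(x)),(g|_{B(y,r_0)})^{-1}(x)\bigr)+d\bigl((g|_{B(y,r_0)})^{-1}(x),(f|_{B(y,r_0)})^{-1}(x)\bigr).
\]
The second summand is bounded by a constant times $\|g-f\|_{C^0}$ via the inverse function theorem, and can be made as small as desired by shrinking $\mathcal{U}$. The first summand is the delicate piece and the main obstacle: the naive Lipschitz bound $\|D(g|_{B(y,r_0)})^{-1}\|\cdot d(h(x),x)$ may exceed $\epsilon$, because in the stable direction of the hyperbolic splitting near $\Gamma$ we have $\|D(g|_{B(y,r_0)})^{-1}\|>1$. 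To close the bound I would invoke the foliated structure from Lemma \ref{lfoliation}: the relation $h(f^j(w))\in g^j(H(F_w(f)))$ enforced during the construction of $h$ on the basin $B_0$ means the displacement $h(x)-x$ is not arbitrary but is aligned with the matched unstable leaves of $\mathcal{F}_f$ and $\mathcal{F}_g$, hence essentially along the unstable direction where $(g|_{B(y,r_0)})^{-1}$ contracts by at least $\lambda^{-1}$. Combined with $\epsilon<\alpha/2$, the first summand stays below a definite fraction of $\epsilon$, and the total remains below $\epsilon$. The whole lemma should then be read as the inductive step propagating the control $d(h(\cdot),\cdot)<\epsilon$ from $x$ to its preimage in $U'$.
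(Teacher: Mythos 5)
Your proposal is correct and follows essentially the paper's own route: $h(y)$ is defined as the unique $g$-preimage of $h(x)$ in the ball of radius $\epsilon_0/2$ about $y$, and the bound $d(h(y),y)<\epsilon$ is obtained because, by the compatibility of $h$ with the foliations ($h(f^j(\cdot))\in g^j(H(F_{\cdot}(f)))$), the displacement is, up to a $\rho$-small error coming from the $C^1$-closeness of $F_y(f)$ and $H(F_y(f))$, carried along an unstable leaf of $\mathcal F_g$, where $g^{-1}$ contracts by $\lambda^{-1}$; the paper simply runs this same estimate forward, comparing the upper bound $d(g(h(y)),g(y))\le \epsilon+\delta$ with the leafwise expansion lower bound $\lambda d(h(y),y)-(\lambda+K)\rho$, giving $d(h(y),y)\le(\epsilon+\delta+\rho(\lambda+K))\lambda^{-1}<\epsilon$. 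The only inessential slip is your appeal to $\epsilon<\alpha/2$: the expansivity constant plays no role in this step, only $\lambda>1$ does (and the expanding case $\Lambda_j\subset E$ is the same argument, simplified).
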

\begin{proof} By the choice of $\epsilon_0$ and $\mathcal U$, it follows that $g^{-1}(h(x))\cap B(y;\epsilon_0/2)$ contains just one point, denoted $y'$. If $h(y)$ is defined as equal to $y'$, then we must prove that $d(y,y')<\epsilon$.\\
The proof of what follows is inspired in {\cite[Lemma 2]{ipr2}}. Assume first that $\Lambda_j\subset \Gamma$. Given a positive constant $\rho$ there exists a neighborhood $\mathcal U$ of $f$ such that the leaves $F_y(f)$ and $H(F_y(f)$ are $\rho$-close in $C^1$ topology. Then there exists a point $z\in H(F_y(f))$ such that $d(z,h(y))<\rho$. It follows that
\begin{eqnarray*}
d(g(h(y)),g(y))  & \geq & d(g(h(y)),g(z))-d(g(z),g(y))\\
& \geq & \lambda d(h(y),z)-K\rho\geq \lambda d(h(y),y)-\lambda\rho-K\rho,
\end{eqnarray*}
where $K$ is taken so that $d(g(z_1),g(z_2))\leq Kd(z_1,z_2)$ for every $g\in \mathcal U$ and $z_1$, $z_2$ in $M$.\\
On the other hand,
$$
d(g(h(y)),g(y))=d(h(f(y)),g(y))\leq d(h(x),x)+d(f(y),g(y))<\epsilon+\delta,
$$
where $\delta$ is the $C^0$ distance between $f$ and $g$. These equations imply that $d(h(y),y)\leq (\epsilon+\delta+\rho(\lambda+K))\lambda^{-1}$.This is less than $\epsilon$ if $\delta$ and $\rho$ are sufficiently small, which is obtained by diminishing $\mathcal U$.\\
It remains to consider the case where $\Lambda_j\subset E$, but the same argument (simplified because the basic piece is expanding) applies.
\end{proof}

Once a small $\epsilon>0$ and an integer $N$ are fixed, there exists $\epsilon'>0$ such that $d(h(f^j(x)),f^j(x))<\epsilon'$ for some $j\leq N$ implies $d(h(x),x)<\epsilon$.\\
On the other hand, one can make the restriction of $h$ to a neighborhood $U$ of the attractors as close to the identity as wished, say $d(h(y),y)<\epsilon'$ whenever $y\in U$. If a point $x$ belongs to $U'\cap B$, it is known by Lemma \ref{l1} that $f^k(x)\notin U'\cap U$ for at most $N$ iterates. Using Lemma \ref{l2} we conclude:

\begin{lemma}
\label{l3}
Given $\epsilon>0$ there exist a neighborhood $\mathcal U$ of $f$ and a number  $\epsilon'>0$ such that $d(h(y),y)<\epsilon'$ for every $y\in U$ implies that $d(h(x),x)<\epsilon$ for every $x\in U'\cap B$.
\end{lemma}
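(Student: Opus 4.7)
The plan is to propagate $\epsilon'$-control of $h$ on $U$ backward along forward orbits to obtain $\epsilon$-control on $U' \cap B$, using Lemma \ref{l2} through iterates remaining in $U'$ and absorbing the at most $N$ excursions outside $U \cup U'$ via the uniform Lipschitz constant of local inverse branches of $g$. For $x \in U' \cap B$, since $x$ lies in the basin of an attractor, there is a smallest $n_0 = n_0(x)$ with $f^{n_0}(x) \in U$; by Lemma \ref{l1}, at most $N$ of the intermediate iterates lie outside $U \cup U'$. I would then define $h(x)$ by successive backward choices: at each step back from $f^{k+1}(x)$ to $f^k(x)$, take the unique preimage in $g^{-1}(h(f^{k+1}(x)))$ within $\epsilon_0/2$ of $f^k(x)$, which exists and is unique by the choice of $\epsilon_0$.

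The bound $d(h(x), x) < \epsilon$ then follows by induction along the backward orbit. Starting from $d(h(f^{n_0}(x)), f^{n_0}(x)) < \epsilon'$, at each backward step one of two things happens. If both endpoints lie in $U'$, Lemma \ref{l2} preserves the $\epsilon$-bound, provided $\mathcal U$ has been shrunk so the parameters $\delta, \rho$ in its proof are small. If $f^k(x) \notin U \cup U'$, no hyperbolic contraction is available, but the local Lipschitz constant $K$ of inverse branches of $g$ (uniform in $\mathcal U$) together with the $C^0$-distance between $f$ and $g$ give a bound of the form $d(h(f^k(x)), f^k(x)) \leq K\,d(h(f^{k+1}(x)), f^{k+1}(x)) + \mathrm{dist}_{C^0}(f,g)$. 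Since Lemma \ref{l1} bounds the number of such bad steps by $N$ along the entire orbit, the accumulated distortion is at most a polynomial in $K$ of degree $N$ applied to $\epsilon'$ plus bounded $C^0$ corrections, which is less than $\epsilon$ provided $\epsilon'$ and $\mathcal U$ are chosen small.

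This is precisely the content of the remark preceding the statement: for fixed $\epsilon>0$ and $N$, one can choose $\epsilon'>0$ so small that $\epsilon'$-control at any iterate $\leq N$ steps away propagates back to $\epsilon$-control at $x$; combined with Lemma \ref{l2} over the (possibly long) stretches inside $U'$, which contribute no distortion at all, this yields the conclusion. The main subtlety I expect is checking that the preimages chosen by the above procedure are consistent with $h$ already constructed on $B_0$ via Lemma \ref{l0} — this follows from uniqueness of $\epsilon_0/2$-close preimages of $g$ and the fact that, once inside the immediate basin, the definitions automatically agree because $h$ was already defined there using the same local-inverse rule.
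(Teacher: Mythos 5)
Your proposal is correct and takes essentially the same route as the paper: the paper's (very brief) argument is exactly the backward propagation you describe --- $\epsilon'$-closeness of $h$ to the identity on $U$, Lemma \ref{l2} along the stretches of the orbit inside $U'$, and absorption of the at most $N$ iterates outside $U\cup U'$ (Lemma \ref{l1}) via the remark preceding the statement, which your Lipschitz estimate for the inverse branches of $g$ simply makes explicit. No gap to report.
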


It follows that $d(h(x),x)<\epsilon$ for every $x\in B$: indeed, this follows by the previous lemma since a point $x\in B$ stays at most $N$ iterates in $B\setminus (U\cup U')$. It remains to prove that $h$ can be extended to the closure of $B$, that equals $M$. If $x\notin B$, then there exists $q$ such that $f^n(x)\in U'$ for every $n\geq q$. If $y=f^q(x)$ then there exists $j$ such that either $y\in \Lambda_j$ or $y\in W^s(\Lambda_j)\setminus \Lambda_j$ for some $j$.\\
Assume $y\in \Lambda_j$, and let $\{z_n\}$ be a sequence in $B$ convergent to $y$. Assume by contradiction that there exist two subsequences of $\{h(z_n)\}$ converging to different points. Say $h(z'_n)\to z$ and $h(z''_n)\to w$. Note that as $z$ and $w$ belong to $\Lambda_j(g)$, one can use the expansivity of $g$ in $\Lambda_j$ to assure that there exists an integer $m$ such that
\begin{equation}
\label{e1}
d(g^m(z),g^m(w))>2\epsilon.
\end{equation}
Once $m$ is fixed, we can choose $n$ large and points $f^m(z'_n)$ and $f^m(z''_n)$ that are arbitrarily close (if $m$ was negative, then $f^m(z'_n)$ is the preimage of $y$ that is closest to $f^m(y)\cap \Lambda_j$). As $h$ is $\epsilon$-close to the identity in $B$, it follows that
$$
d(h(f^m(z'_n)),h(f^m(z''_n)))<2\epsilon.
$$
But $h(f^m(z_n))=g^m(h(z_n))$, so taking $n$ large the above equation contradicts (\ref{e1}).\\
Consider now the case where $y\in W^s(\Lambda_j)\setminus\Lambda_j$. Assume by contradiction that there exist two sequences $z_n'$ and $z''_n$ in $B$ both converging to $y$ but such that $h(z'_n)$ and $h(z''_n)$ converge to different points $z$ and $w$. Clearly $z$ and $w$ belong to $H(F_y(f))$. It follows now that there exists a positive $m$ such that $d(g^m(z),g^m(w))>2\epsilon$. Reasoning as above provides a contradiction.

\section{More examples and Questions.}

The construction of the example in section 4 allows some generalizations. For instance, one can change the fixed attractor for a non periodic attractor. The result is a map that is $C^1$ $\Omega$-stable but all its perturbations are not $C^1$ stable, because the unstable intersections occur in the basin of an attractor that is not periodic, thus contradicting the Strong Transversality Condition.

Every covering map $f$ of the two-torus induces a linear map on the homology $H^1(T^2)=\R^2$. At the same time, a linear map $A$ in the plane  with integer coefficients, induces a covering of the two-torus whose induced map is $A$. Moreover, the map $f$ belongs to the same homotopy class of the linear map associated.

The example of section 4 induces the linear map $\left(%
\begin{array}{cc}
  2 & 0  \\
  0 & 1  \\
\end{array}%
\right)$.
It is easy to imitate the idea to produce a $C^1$ stable map whose induced map is the expanding matrix
$\left(\begin{array}{cc}
  2 & 0  \\
  0 & 2  \\
\end{array}%
\right)$.

In both cases, however, the covering is semi-conjugated to an expanding map. Indeed, Przytycki's example is semiconjugated to $z\mapsto z^2$ in $S^1$.

\noindent
{\bf Question 1:} Is every $C^1$ stable map in the two torus semi-conjugated to an expanding map?

An affirmative answer to this question would be very useful towards a classification of the $C^1$ stable maps of the two torus.

The homotopy class of a diffeomorphism always contains a $C^1$ stable diffeomorphism; also, the class of an expanding map contains
a $C^1$ stable map.

\noindent
{\bf Question 2:} Does every homotopy class contain a $C^1$ stable map?

If the answer to the second question is negative when the manifold is the two torus, then the answer to the first question is affirmative.

\end{document}